\documentclass[makeidx,reqno]{amsart}
\usepackage[active]{srcltx}
\usepackage{epsfig}
\usepackage{hyperref}
\usepackage{amsmath}
\usepackage{amssymb}
\newtheorem{Proposition}{Proposition}
  \newtheorem{Remark}{Remark}
  \newtheorem{Corollary}[Proposition]{Corollary}
  \newtheorem{Lemma}[Proposition]{Lemma}
  
  \newtheorem{Theorem}{Theorem}

\newtheorem{Example}[Proposition]{Example}
\newtheorem{Note}[Proposition]{Note}
  
\newtheorem{Assumptions}{Assumption}

\newcommand {\z}{{\noindent}}
 
\def\CC{\mathbb{C}}
 
 \def\NN{\mathbb{N}}

\def\QQ{\mathbb{Q}}
\def\Re{\mathrm{Re}}
\def\Im{\mathrm{Im}}

\def\phi{\varphi}

 \def\({\left(} \def\){\right)} \makeindex
\author{O. Costin, M. Huang} \address{Mathematics Department, The Ohio State University, Columbus, OH 43210} \title{On the geometry of Julia sets} 
\begin{document}
\begin{abstract}
   We show that the Julia set of quadratic maps with parameters in
  hyperbolic components of the Mandelbrot set is given by a transseries formula, rapidly convergent  at any repelling  periodic point.

Up to conformal transformations, we obtain $J$
  from a smoother curve of lower Hausdorff
  dimension, by replacing pieces of the more regular curve by
  increasingly rescaled elementary ``bricks'' obtained from the
  transseries expression. Self-similarity of $J$, 
up to conformal transformation, is manifest in the formulas. 

The Hausdorff dimension of $J$ is estimated by the transseries formula.
The analysis extends  to polynomial maps.

\end{abstract}
\maketitle
\tableofcontents
\section{Introduction}
 Iterations of maps  are among the simplest mathematical models
of chaos. The understanding
of their behavior and of the associated fractal 
Julia sets (cf. \S\ref{KR} for a brief overview of relevant notions)  has progressed tremendously since the pioneering
work of Fatou and Julia (cf. \cite{Fatou1}--\cite{Julia}). The subject triggered the
development of powerful new mathematical tools. In spite of a vast literature  and of a century 
of fundamental advances, many important
problems are still not completely understood, even in the case of quadratic
maps, see {\em e.g.} \cite{Milnor}.

As discussed in \cite{Collet}, a ``major open question about fractal sets is to
provide quantities which describe their complexity.  The Hausdorff
dimension is the most well known such quantity, but it does not tell
much about the fine structure of the set. Two sets with the same
Hausdorff dimension may indeed look very different (computer pictures
of these sets, although very approximate, may reveal such
differences).``

A central goal of this paper is to provide a detailed geometric analysis
of local properties of Julia sets of polynomial maps.

It will be apparent from the proof that the method and
many results apply to polynomials of any order. However, we will frequently use for illustration purposes the quadratic map
$z\mapsto z^2+c$, or equivalently after a linear change of variable,
$x\mapsto P(x)=\lambda x(1-x)$, $c=\lambda/2-\lambda^2/4$ (note the symmetry $\lambda\to 2-\lambda$). The associated map 
iteration is
\begin{equation}
  \label{eq:logist11}
  x_{n+1}=P(x_n)
\end{equation}

Our analysis applies to the hyperbolic components of the Mandelbrot set
of the iteration, see \S\ref{KR}.
Let $\varphi$ be the B\"otcher map of $P$ (with the definition \eqref{eq:eqvphi} below), analytic 
in the punctured unit disk $\mathbb{D}\setminus \{0\}$; then $J=\varphi(\partial\mathbb{D})$. 

Let
 $p_0=\varphi(e^{2\pi i t_0})$ be any periodic point on $J$. We show that for $z\in\mathbb{D}$ 
near $e^{2\pi i t_0}$,
$\varphi$  is given by an entire function of
$s^b\omega(\ln s)$ where $s=-\ln( e^{-2\pi i t_0} z)$ (clearly
$s\to 0$ as $p_0$ is approached). Here $b$ has a simple formula and
$\omega$ is a real analytic periodic function.

In particular, at any such $p_0$, the local shape of $J$ is, to leading order,
the image of the segment $[-\epsilon,\epsilon]$ under a map of
the form $Az^b$. The averaged value $b_E$  of $b$  (over
all periodic points)
and the Hausdorff dimension of $J$, $D_H$, satisfy
$D_H\ge 1/\Re\, b_E$.
Up to conformal transformations, $J$ is obtained from a curve of
higher regularity and lower Hausdorff dimension than $J$, by replacing
pieces of the more regular curve by increasingly rescaled basic
``bricks'' obtained from the transseries expression. This is analogous
to the way elementary fractals such as the Koch snowflake are obtained.
We present figures representing $J$ for various
parameters in this constructive way.

\subsection{Notation and summary of known results}\label{KR}
  We use
standard terminology from the theory of iterations of holomorphic
maps.  If $f$ is an entire map, the set of points in $\CC$ for which
the iterates $\{f_n\}_{n\in\NN}$ form a normal family in the sense of
Montel is called the Fatou set $\mathcal{F}$ of $f$ while the Julia
set $J$ is $\CC\setminus \mathcal{F}$. For instance, if $f=P$ ($P$ being the quadratic map above)
and
$|\lambda|<1$, the Fatou set has two connected components,
$\mathcal{F}_{\infty}=\{z_0: |P_n(z_0)|\to \infty\text{ as }
n\to\infty\}$ and $\mathcal{F}_{0}=\{z_0: |P_n(z_0)|\to 0\text{ as }
n\to\infty\}$ where
$$P_n=P^{\circ n}$$
is the $n$-th self-composition of $P$. The Julia set in this example
is $\overline{\mathcal{F}_{0}}\cap \overline{\mathcal{F}_{\infty}}$, a
Jordan curve.

The substitution $x=-y^{-1}$ transforms (\ref{eq:logist11}) into
\begin{equation}
  \label{eq:infty1}
  y_{n+1}=\frac{y_n^2}{\lambda(1+ y_n)}=f(y_n)
\end{equation}

By B\"otcher's theorem (for (\ref{eq:logist11}) we give a self contained
proof in \S\ref{S5}), there exists a unique  map $F$, analytic near zero,
with $F(0)=0$, $F'(0)=\lambda^{-1}$ so that $(F\circ f\circ F^{-1})(x)=x^2$. Its inverse, $G$, used in \cite{DouadyHubbard}, conjugates (\ref{eq:infty1}) to the canonical map $z_{n+1}=z_n^2$, and it can be checked that
\begin{equation}
  \label{eq:eqG}
  G(z)^2=\lambda G(z^2)(1+G(z));\ \ G(0)=0,\ G'(0)=\lambda
\end{equation}
Equivalently (with $\varphi=1/G$) there exists a unique map $\varphi$
analytic in $\mathbb{D}\setminus \{0\}$
\begin{equation}
  \label{eq:eqvphi}
  P(\varphi(z))=\varphi(z^2);\ \ \ z\in \mathbb{D}\setminus \{0\};\ \ \lim_{z\to 0}z\varphi(z)=1/\lambda
\end{equation}
We list some further definitions and  known facts that we use; see, e.g., \cite{Beardon,Devaney,Milnor}.
\begin{Remark}\label{R7}{\rm
    \begin{enumerate}

\item $J$ is the closure of the set of repelling periodic points.

\item For polynomial maps, and more generally, for entire maps, $J$ is the boundary of the set of points which converge to infinity under the iteration.

\item For general polynomial maps, if the maximal disk of analyticity of $G$
(where now $1/G(z^n)=P_n(1/G(z))$) is the unit disk $\mathbb{D}_1$, then $G$ maps $\mathbb{D}_1$ biholomorphically onto the immediate basin $\mathcal{A}_0$ of zero.  If on the contrary the maximal disk is $\mathbb{D}_r,\,r<1$, then there is at least one other critical point in $\mathcal{A}_0$, lying in $G(\partial\mathbb{D}_r)=J_y$, the Julia set of (\ref{eq:infty1}), see \cite{Milnor} p.93.

\item If $r=1$, it follows that $G(\partial\mathbb{D}_1)=J_y$.

\item  For the iteration $t_{n+1}=t_n^2+c$, the  Mandelbrot set is defined
as (see e.g. \cite{Devaney})
\begin{equation}
  \label{eq:defM}
  \mathcal{M}=\{c: t_n \text{ bounded }\text{ if } t_0=0\}
\end{equation}
If $c\in\mathcal{M}$, then clearly $y_n$ in  (\ref{eq:infty1}) are bounded away from zero.
\item \label{(iii)}  $\mathcal{M}$ is a compact set; it coincides
with the set of $c$ for which $J$ is connected. The main
cardioid $\mathcal{H}=\{(2e^{it}-e^{2it})/4:t\in [0,2\pi)\}$ is contained in
$\mathcal{M}$; see \cite{Devaney}. This means $\{\lambda:|\lambda|<1\}$
corresponds to the interior of $\mathcal{M}$. We have
 $|\lambda|=1\Rightarrow c\in \partial \mathcal{M}\subset\mathcal{M}$.

Assume now that $c\in\mathcal{M}$ and $\lambda\ne 0$. Then,

\item The function $\varphi$ extends analytically to $\mathbb{D}$.

  \item (\cite{DouadyHubbard} p. 121)  If  $z$ approaches a rational angle, $e^{2\pi i t},\
t\in\QQ$, then the limit
\begin{equation}
  \label{eq:eqH0}
  L_t=\lim_{\rho\to 1} \varphi\left(\rho e^{2\pi i t}\right)\ \ \text{exists.}
\end{equation}
\item  (\cite{Milnorp})  A quadratic map has at most one non-repelling periodic orbit.

\item
   For every $\lambda$ such that the corresponding $c$ is in $\mathcal{M}$,
and any $t\in\QQ$, the limit
\begin{equation}
  \label{eq:eqH}
  \varphi\left(z e^{2\pi i t}\right)\to  L_t  \ \text{as}\ z\to 1 \ \text{nontangentially}  \ \footnote{Nontangentially is understood, as usually, as $z\to 1$  along any
          curve inside $\mathbb{D}$ which   lies between two
straight line segments inside $\mathbb{D}$ not tangent to
$\partial \mathbb{D}$.}
\end{equation}
exists.
(See also \cite{McMullen}.)
  This follows immediately from (\ref{eq:eqH0}), the boundedness of $\varphi$
and the Sectorial Limit Theorem, see \cite{Conway}, p. 23, Theorem 5.4.

\item In any hyperbolic component of $\mathcal{M}$ (components of
$\mathcal{M}$ corresponding to (unique) attracting cycles), the points $z\in$ fix $P_n$
on  the corresponding Julia set have the property $|P'_n(z)|>1$ and $\varphi$
is continuous in $\overline{\mathbb{D}}$.
\end{enumerate} }

\begin{figure}
  \centering
 \includegraphics[scale=0.4]{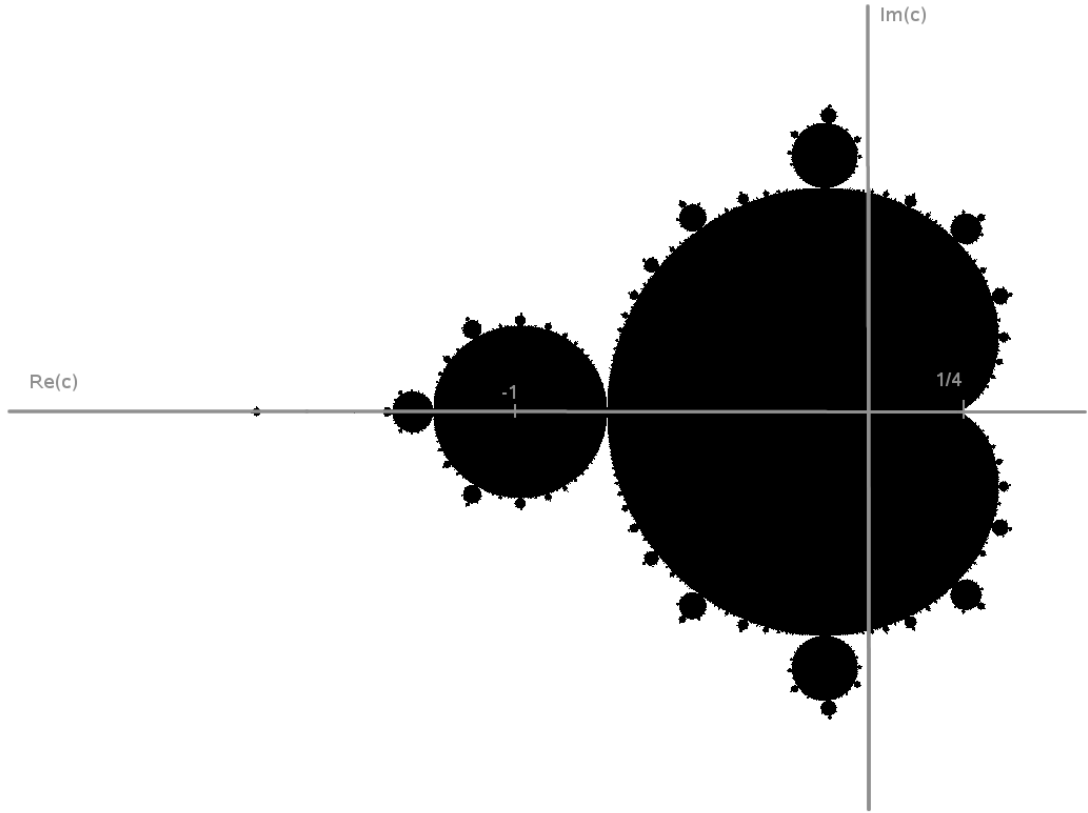}
  \caption{The Madelbrot set.}\label{M1}
  \label{fig:12}
\end{figure}

\end{Remark}
\section{Main results}

\subsection{Expansions at the fixed points}
Assume that $c$ is in the interior of $\mathcal{M}$
and let $$s=-\ln( e^{-2\pi i t} z)$$ Note that $s\to 0$ as $z\to e^{2\pi i
  t}$.
We can of course restrict the analysis to $t\in [0,1)$,
and from this point on we shall assume this is the case.
As is well known,  if $t=p/q$ with odd $q$, then the binary expansion of $t$ is
 periodic; in general  it is eventually periodic.

\begin{Theorem}\label{conj1}
  (i) Let $t=p/q$ with odd $q$, take $N$ between $1$ and $q-1$  so that $2^Nt=t \mod 1$, and let
  $M=2^N$.  There is a  $\ln M$-periodic function $\omega$, \footnote{The function $\omega$ depends, generally, on $t$.} analytic in the strip $\{\zeta:|\Im
  \zeta|<\pi/2\}$ and an entire function $g$ so that $g(0)=0,g'(0)=1$
  and
  \begin{equation}
    \label{eq:formvarphi}
    \varphi(z)=L_t+g\left(s^b\omega(\ln s)\right)
  \end{equation}
\begin{equation}
  \label{eq:valb}
  b=b(L_t)=\frac{\ln(P'_N(L_t))}{N\ln 2}
\end{equation}
for $|\arg\ln s|\le \pi/2$   and where $P_N(L_t)=L_t$. \footnote{It is interesting
to mention here  Eulers's totient theorem: if $n$ is a positive integer and $a$ is a positive integer coprime to $n$, then $a^{\phi(n)}=1 \mod n$, where $\phi(n)$
is the Euler totient function of $n$, the number of positive integers less than or equal to $n$ that are coprime to it. In our
problem, we need to solve the equation $(2^N-1)p=0 \mod q$ which
is implied by $(2^N-1)=0 \mod q$. This often allows for a good estimate on how large $N$ needs to be.} \footnote{If $t=0$  (\ref{eq:formvarphi}) applies with $N=1$  and  $L_0=0$
or $1=\lambda(1-2L_0)$, depending on $a$.}

(ii) If  $\lambda\notin\{0, 2\}$, then
the lines $\Im z=\pm \pi/2$ are natural boundaries for $\omega$. In particular,
$\omega$ is a nontrivial function for these $\lambda$.

(iii) If $t=p/(2^Mq)$ with $q$ odd and $M>0$, and $z=z_1 e^{2\pi i t}$,
then we have $z^{2^M}=z_1^{2^M}e^{2\pi i t'}$ where $t'$ is as in (i). We have
\begin{equation}
  \label{eq:other}
 \varphi(z)=L_t+\frac{g\left({s'}^b\omega(\ln s')\right)}{P'_M(L_t)} +g^2\left({s'}^b\omega(\ln s')\right)F_1\left(g\left({s'}^b\omega(\ln s')\right)\right)
\end{equation}
where $s'$ is as in (i) with $t$ replaced by $t'$ and $F_1$ is an algebraic
function, analytic at the origin.
\end{Theorem}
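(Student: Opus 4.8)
The plan is to set up a functional equation for the local conjugacy at the periodic point $L_t$ and show that its solution is, in the right coordinate, the composition of a linearizing map with the periodic ``Fourier'' structure coming from the doubling map on angles. Concretely, for part (i), since $2^N t = t \bmod 1$, the point $p_0 = \varphi(e^{2\pi i t})$ is fixed by $Q := P_N$, and the hyperbolicity hypothesis (Remark~\ref{R7}, last item) gives $|Q'(L_t)| > 1$. By B\"ottcher/Koenigs linearization applied to $Q$ at its repelling fixed point, there is a map $\psi$, analytic and univalent near $L_t$ with $\psi(L_t)=0$, $\psi'(L_t)=1$, conjugating $Q$ to multiplication by $\mu := Q'(L_t)$; its inverse $g$ is then entire (a standard fact for polynomial maps: the Koenigs coordinate of a repelling fixed point extends to an entire function on $\CC$, since the inverse branch iterates pull back the whole plane), with $g(0)=0$, $g'(0)=1$. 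The task is then to identify $\psi(\varphi(z)-\text{shift})$ as $s^b\omega(\ln s)$.

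Next I would transport the functional equation \eqref{eq:eqvphi}. Iterating it $N$ times gives $P_N(\varphi(z)) = \varphi(z^{2^N}) = \varphi(z^M)$. Writing $z = e^{2\pi i t} e^{-s}$, so $z^M = e^{2\pi i M t} e^{-Ms} = e^{2\pi i t} e^{-Ms}$ (using $Mt \equiv t$), and setting $h(s) := \psi(\varphi(e^{2\pi i t}e^{-s}) - L_t)$ — wait, one must be careful about the additive constant: since $\psi$ kills the translation, put $h(s) := \psi\big(\varphi(e^{2\pi i t - s})\big)$ after recentering $Q$ at $L_t$. Then the conjugacy $Q = g\circ(\mu\,\cdot)\circ\psi$ turns $P_N\circ\varphi = \varphi\circ(\text{mult. by }M \text{ on }s)$ into the clean relation
\begin{equation}
h(Ms) = \mu\, h(s).
\end{equation}
Now $b = \ln\mu/\ln M$ (which is exactly \eqref{eq:valb} since $M=2^N$ and $\mu = P_N'(L_t)$), so $h(s) s^{-b}$ is invariant under $s\mapsto Ms$, i.e. under $\ln s \mapsto \ln s + \ln M$; hence $\omega(\zeta) := h(e^\zeta)e^{-b\zeta}$ is $\ln M$-periodic, and $\varphi(z) = L_t + g(s^b\omega(\ln s))$ as claimed. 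Analyticity of $\omega$ in the strip $|\Im\zeta| < \pi/2$ comes from: $\varphi$ is analytic in $\mathbb{D}$ up to the boundary (Remark~\ref{R7}), $\psi$ is analytic near $L_t$, and the strip $|\arg\ln s|\le \pi/2$ — equivalently a Stolz-type sector at $e^{2\pi i t}$ — maps into $\mathbb{D}$ near the boundary point, so the composition is analytic there; periodicity then extends it to the full strip. Reality of $\omega$ on $\RR$ follows from a symmetry/conjugation argument on $\varphi$.

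For part (ii), the claim is that $\Im\zeta = \pm\pi/2$, i.e. $\arg\ln s = \pm\pi/2$, which corresponds to $z$ approaching $e^{2\pi i t}$ tangentially along $\partial\mathbb{D}$ — is a natural boundary. The idea: if $\omega$ extended past the line, then $\varphi$ would extend analytically across the arc $\partial\mathbb{D}$ near $e^{2\pi i t}$; but $\varphi(\partial\mathbb{D}) = J$ is the Julia set, and (for $\lambda\notin\{0,2\}$, i.e. $c\ne 0,-2$) $J$ is not a real-analytic arc — indeed it contains the repelling periodic points and has branching/non-smooth structure, and an analytic continuation of $\varphi$ across an arc would force $J$ locally to be an analytic curve, contradicting e.g. the density of distinct periodic points with distinct exponents $b$, or the positivity of the conformal distortion. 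I would make this precise by noting that $\Re b > 1$ at typical periodic points (so the curve has a genuine cusp-like singularity in the $Az^b$ model), which is incompatible with analytic smoothness; the degenerate cases $\lambda = 0$ ($\varphi(z) = $ trivial) and $\lambda = 2$ (the Chebyshev case, $J = [-\,?,?]$ a segment, $b=1$, $\omega$ constant) are exactly the exceptions. The main obstacle is this part (ii): turning ``$J$ is not analytic'' into a rigorous contradiction with $\omega$ extending requires care — one cannot merely invoke that $J$ is a fractal of dimension $>1$ (that shows $\varphi|_{\partial\mathbb{D}}$ is not analytically continuable as a boundary parametrization, but the tangential-limit coordinate $s$ needs the argument reconnected to the nontangential Fatou boundary values). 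I expect one argues by contradiction using the functional equation \eqref{eq:eqvphi} itself: an analytic continuation of $\varphi$ across an arc propagates under $P$ and $z\mapsto z^2$ to a continuation across a dense set of arcs, forcing $\varphi$ to be analytic in a neighborhood of $\overline{\mathbb{D}}$, hence $J$ finite or an analytic Jordan curve — and for quadratic $P$ the only such cases are $\lambda\in\{0,2\}$.

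For part (iii), this is a reduction to (i): when $t = p/(2^M q)$ the $M$-th power $z\mapsto z^{2^M}$ sends the angle $t$ to the odd-denominator angle $t'$, so applying \eqref{eq:eqvphi} $M$ times gives $P_M(\varphi(z)) = \varphi(z^{2^M})$, and the right side is handled by part (i) with $s'$, $t'$, yielding $\varphi(z^{2^M}) = L_{t'} + g({s'}^b\omega(\ln s'))$. One then solves for $\varphi(z)$ by inverting $P_M$ near the relevant preimage $L_t$ of $L_{t'}$: since $c$ is in the interior of $\mathcal{M}$ and $L_t$ is a preperiodic (eventually periodic) point, $P_M$ is a local biholomorphism near $L_t$ (its derivative there is nonzero because the unique non-repelling cycle, if any, is the attracting one in the Fatou set, not on $J$), so $P_M^{-1}$ has a convergent Taylor expansion $L_t + (P_M'(L_t))^{-1}u + (\text{higher order})$; collecting the quadratic-and-higher terms into $u^2 F_1(\cdot)$ with $F_1$ algebraic and analytic at $0$ (it is the composition of the analytic inverse with $g$, both algebraic/analytic) gives exactly \eqref{eq:other}. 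The only delicate point here is confirming $P_M'(L_t)\ne 0$, i.e. that no preimage $L_t$ hits the critical point of $P$; this holds because the critical point lies in the (bounded) Fatou component for $c$ in a hyperbolic component, while $L_t\in J$.
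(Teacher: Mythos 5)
Your argument for (i) matches the paper's: both reduce \eqref{eq:formvarphi} to the functional equation $F_0(Ms)=A(F_0(s))$ for $F_0(s)=\varphi(e^{2\pi i t-s})-L_t$ and $A(y)=P_N(y+L_t)-L_t$, construct the normal-form (inverse-Koenigs) map $g$ --- shown entire by iterating $g(wy)=A(g(y))$, which is exactly what the paper's normal form coordinates lemma does --- and read off the multiplicative periodicity of $s^{-b}g^{-1}(F_0(s))$. Part (iii) is likewise the intended route: invert $P_M$ analytically near $L_t$ (legitimate since the critical point lies off $J$ in a hyperbolic component, so $P_M'(L_t)\ne 0$), expand the branch as $L_t+u/P_M'(L_t)+u^2F_1(u)$, and substitute $u=g({s'}^b\omega(\ln s'))$; $F_1$ is algebraic because $P_M$ is a polynomial.

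For (ii) you identified the right strategy but flagged the decisive step as open; the paper closes it, and more arithmetically than the qualitative ``$J$ is not an analytic curve'' contradiction you gesture at. Analyticity of $\varphi$ at any binary-rational boundary point propagates, via $\varphi(z^{2^J})=P_J(\varphi(z))$, to analyticity at $z=1$; then \eqref{eq:valb} gives $b_1=\ln P'(L_0)/\ln 2$, which must lie in $\NN$ for $s^{b_1}$ to be analytic at $s=0$, so $P'(L_0)=2^k$. Since $L_0=\varphi(1)$ is either $0$ or $(\lambda-1)/\lambda$, this forces $\lambda=2^k$ or $\lambda=2-2^k$, and the requirement $c\in\mathrm{int}\,\mathcal{M}$ leaves only $\lambda\in\{0,2\}$. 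Two small corrections to your sketch: the heuristic ``$\Re b>1$ at typical periodic points'' is not the obstruction --- $\Re b$ falls on both sides of $1$, cf.\ \eqref{eq:angles} --- what matters is $b\notin\NN$; and it is the inverse $g$ of the Koenigs linearizer that is entire, not the linearizer itself, as you do eventually say.
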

\begin{Corollary}\label{C1}{\rm 
   It follows from Theorem \ref{conj1} (i)
  that the Fourier coefficients $c_k$ of $\omega$ decrease roughly like $d^k$,
  with $d=e^{-2\pi^2/\ln M}$. Since $2\pi^2/\ln 2\approx 28.5$, $\omega$ can often be numerically replaced, with good accuracy, by a constant.}
\end{Corollary}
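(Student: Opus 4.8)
The plan is to read the decay rate of the $c_k$ directly off the strip of analyticity of $\omega$ furnished by Theorem~\ref{conj1}(i): for a periodic function analytic in a strip, the Fourier coefficients decay geometrically at a rate fixed by the half-width of the strip and the period, so beyond this bookkeeping there is essentially nothing to prove. Concretely, since $\omega$ is $\ln M$-periodic, write $\omega(\zeta)=\sum_{k\in\ZZ}c_k\,e^{2\pi i k\zeta/\ln M}$, $c_k=(\ln M)^{-1}\int_0^{\ln M}\omega(\xi)\,e^{-2\pi i k\xi/\ln M}\,d\xi$. By Theorem~\ref{conj1}(i) the integrand continues holomorphically to $\{|\Im\zeta|<\pi/2\}$ and, being continuous and $\ln M$-periodic, is bounded on each closed substrip $|\Im\zeta|\le\eta$ with $\eta<\pi/2$. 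Integrating the holomorphic function around the rectangle with vertices $0,\,\ln M,\,\ln M+i\eta,\,i\eta$, the two vertical sides cancel by periodicity, so the contour in the formula for $c_k$ may be translated to $\Im\zeta=\eta$; taking $\eta$ of sign opposite to that of $k$ and letting $|\eta|\uparrow\pi/2$ gives, for every $\varepsilon>0$, a bound $|c_k|\le C_\varepsilon\, d^{\,(1-\varepsilon)|k|}$, with $d$ the geometric rate attached to half-width $\pi/2$ and period $\ln M$, namely the quantity displayed in the statement. The word ``roughly'' absorbs this $\varepsilon$-loss together with any sub-exponential prefactor.

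For the reverse inequality, which is what makes ``roughly like $d^k$'' a statement about the true rate rather than a mere upper bound, I would use Theorem~\ref{conj1}(ii). If $|c_k|\le C\rho^{|k|}$ held for some $\rho<d$, the series $\sum_k c_k e^{2\pi i k\zeta/\ln M}$ would converge locally uniformly on a strip strictly wider than $|\Im\zeta|<\pi/2$, yielding an analytic continuation of $\omega$ across the lines $\Im\zeta=\pm\pi/2$ and contradicting part (ii), which declares these lines natural boundaries. Hence $\limsup_{|k|\to\infty}|c_k|^{1/|k|}=d$, the precise content of the claim. The numerical remark then follows at once: for $M=2$ one has $-\ln d=2\pi^2/\ln 2\approx 28.5$, so each $c_k$ with $k\ne 0$ is suppressed relative to $c_0$ by a factor of order $e^{-28}$, and $\omega$ may be replaced by the constant $c_0$ with an error below any practical tolerance; the same is true whenever $\ln M$ is not too large.

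I expect no genuine obstacle: the whole content is the identification of the strip of analyticity, already done in Theorem~\ref{conj1}(i)--(ii). The one point worth flagging is that the strip by itself gives only the upper bound; the matching lower bound, hence the sharpness of the rate $d$, is exactly where the natural-boundary statement (ii) is used essentially rather than decoratively.
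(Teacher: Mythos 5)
Your strategy is the right one, and it is essentially what the paper has in mind: Corollary~\ref{C1} is stated without proof as a direct consequence of Theorem~\ref{conj1}(i), and its content is exactly the standard Paley--Wiener bookkeeping you carry out (shift the contour inside the strip of analyticity, let periodicity kill the vertical sides). Your lower-bound observation---that any strictly faster geometric decay would sum to an analytic continuation of $\omega$ across $\Im\zeta=\pm\pi/2$, contradicting Theorem~\ref{conj1}(ii)---is a worthwhile addition that the paper does not spell out, and it is what justifies the word ``roughly'' as a two-sided statement.

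There is, however, a numerical slip you should have caught rather than reproduced. For a $\ln M$-periodic function analytic in the strip $|\Im\zeta|<a$, the contour shift gives $|c_k|\lesssim e^{-2\pi a|k|/\ln M}$; with $a=\pi/2$ this is $e^{-\pi^2|k|/\ln M}$, not $e^{-2\pi^2|k|/\ln M}$. You assert that ``the geometric rate attached to half-width $\pi/2$ and period $\ln M$'' is ``the quantity displayed in the statement,'' but these disagree by a factor of two in the exponent: the displayed $d=e^{-2\pi^2/\ln M}$ would correspond to a strip of half-width $\pi$, which is precisely what Theorem~\ref{conj1}(ii) rules out (so your own sharpness argument is in tension with the value of $d$ you accept). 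With the corrected rate one gets $\pi^2/\ln 2\approx 14.2$ rather than $28.5$; this is still large enough that the qualitative conclusion---$\omega$ well-approximated by its mean $c_0$ when $\ln M$ is small---survives intact, but the constant in the exponent, and hence the claimed accuracy, should be fixed.
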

\begin{Corollary}\label{C2} {\rm The function $\phi$ has the following convergent transseries expansion near $z=e^{2\pi i t}$ ($t=p/q$, $q$ odd.)

\begin{multline}\label{trans}
  \phi(z)=L_t+\sum_{n=1}^{\infty}a_n \left(s^b \sum_{k=-\infty}^{\infty}c_k s^{2k\pi i/\ln{M}} \right)^n\\
  =L_t+\sum_{n=0}^{\infty} \sum_{k=-\infty}^{\infty} A_{n,k} (-\ln( e^{-2\pi i t} z))^{nb+2k\pi i/\ln{M}}
\end{multline}
where $a_n$ decrease faster than geometrically
and $A_{n,k}$ decrease 
faster than $\epsilon^n d^k$, with $d$ as in Corollary \ref{C1} and
$\epsilon>0$ arbitrary.
A similar result holds for $t=p/(2^mq)$.}
\end{Corollary}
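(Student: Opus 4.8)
The plan is to read off \eqref{trans} from the closed form $\varphi(z)=L_t+g(s^b\omega(\ln s))$ of Theorem~\ref{conj1}(i) by expanding the two ingredients into their natural series: the Taylor series of the entire function $g$ at $0$ and the Fourier series of the $\ln M$-periodic function $\omega$. Given Theorem~\ref{conj1} and Corollary~\ref{C1}, the argument is essentially bookkeeping; the only point that uses genuine input rather than formal manipulation is the geometric decay of the Fourier tail of $\omega$ (and of all its powers) together with the positivity of $\Re b$.

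Write $g(w)=\sum_{n\ge 1}a_n w^n$ with $a_1=g'(0)=1$ and $a_n=g^{(n)}(0)/n!$. Since $g$ is entire its radius of convergence is infinite, so $\limsup_{n\to\infty}|a_n|^{1/n}=0$; this is exactly the statement that $(a_n)$ decreases faster than geometrically, i.e.\ for every $\epsilon>0$ there is $C_\epsilon$ with $|a_n|\le C_\epsilon\,\epsilon^n$. Next, $\omega$ being $\ln M$-periodic and analytic in $\{|\Im\zeta|<\pi/2\}$, it has an absolutely convergent Fourier expansion $\omega(\zeta)=\sum_{k\in\ZZ}c_k e^{2\pi i k\zeta/\ln M}$, and shifting the contour in $c_k=\frac{1}{\ln M}\int_0^{\ln M}\omega(\zeta)e^{-2\pi i k\zeta/\ln M}\,d\zeta$ into the strip gives the geometric decay $|c_k|\le C\,d^{|k|}$ with $d=e^{-2\pi^2/\ln M}$ recorded in Corollary~\ref{C1} (up to shrinking the strip, the ratio can be taken as close to $d$ as desired). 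Setting $\zeta=\ln s$ turns the Fourier series into $\omega(\ln s)=\sum_{k\in\ZZ}c_k s^{2\pi i k/\ln M}$, and inserting this together with the Taylor series of $g$ into $\varphi(z)=L_t+g(s^b\omega(\ln s))$ produces the first equality in \eqref{trans}.

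For the fully expanded form we raise the Fourier series to a power, $\bigl(\sum_k c_k s^{2\pi i k/\ln M}\bigr)^n=\sum_k c^{(n)}_k s^{2\pi i k/\ln M}$, where $c^{(n)}_\bullet$ is the $n$-fold convolution of $(c_k)$, equivalently the Fourier coefficient sequence of $\omega^n$. Multiplying by $a_n s^{nb}$, summing over $n\ge 1$, and recalling $s=-\ln(e^{-2\pi i t}z)$ collects $\varphi(z)-L_t$ into $\sum_{n\ge 0}\sum_{k\in\ZZ}A_{n,k}\,s^{nb+2\pi i k/\ln M}$ with $A_{n,k}=a_n c^{(n)}_k$ (and $A_{0,k}=0$ since $g(0)=0$). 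Since $\omega^n$ is again analytic in $\{|\Im\zeta|<\pi/2\}$ and $\ln M$-periodic, the same contour shift gives $|c^{(n)}_k|\le K^n d^{|k|}$, where $K$ bounds $|\omega|$ on a slightly smaller strip; hence $|A_{n,k}|\le |a_n|\,K^n d^{|k|}$, and because $|a_n|K^n$ is still subgeometric we get, for every $\epsilon>0$, a constant $C_\epsilon$ with $|A_{n,k}|\le C_\epsilon\,\epsilon^n d^{|k|}$, which is the asserted bound. Convergence near $z=e^{2\pi i t}$ is then immediate: for $|\arg\ln s|\le\pi/2$ with $z\to e^{2\pi i t}$ one has $\Im\ln s=\arg s\in(-\pi/2,\pi/2)$, so $\omega(\ln s)$ is a finite number and $g$ being entire the single series converges absolutely; for the double series $|s^{nb}|=|s|^{n\Re b}$ with $\Re b=\ln|P'_N(L_t)|/(N\ln 2)>0$ by \eqref{eq:valb} (because $L_t$ is a repelling periodic point, Remark~\ref{R7}), while $|s^{2\pi i k/\ln M}|=e^{-2\pi k\arg s/\ln M}$ grows at most geometrically in $|k|$ with ratio arbitrarily close to $1$ for $|\arg s|$ small, so together with $|A_{n,k}|\le C_\epsilon\epsilon^n d^{|k|}$, $d<1$, the double sum converges absolutely for $|s|$ small and Fubini legitimizes the rearrangement.

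The only delicate ingredient is thus the uniform geometric control of the Fourier coefficients of $\omega$ and its powers, and this is supplied by Theorem~\ref{conj1}(i)--(ii) (analyticity in the open strip of half-width $\pi/2$, with $\Im\zeta=\pm\pi/2$ natural boundaries); everything else is formal rearrangement justified by the resulting absolute convergence. Finally, for $t=p/(2^m q)$ with $q$ odd one starts from \eqref{eq:other} instead: by the case just treated, $g({s'}^b\omega(\ln s'))$ has a convergent transseries of the above form in the variable $s'$; since $w\mapsto P'_M(L_t)^{-1}g(w)+g(w)^2F_1(g(w))$ is analytic at $w=0$ and vanishes there ($F_1$ being algebraic and analytic at the origin), substituting $w=s'^b\omega(\ln s')$ and re-expanding again yields an everywhere-convergent power series in $s'^b\omega(\ln s')$, so the previous scheme applies verbatim with $b$ and $d$ unchanged and $s$ replaced by $s'$.
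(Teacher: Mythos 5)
Your argument is the paper's "straightforward calculation" carried out in full: Taylor-expand the entire function $g$, Fourier-expand the $\ln M$-periodic $\omega$, substitute into $\varphi=L_t+g(s^b\omega(\ln s))$, and read the decay of $a_n$ from entirety of $g$ and of $c_k$ (hence of the convolutions $c^{(n)}_k$) from the strip of analyticity via Corollary~\ref{C1}, exactly as the paper indicates. You additionally spell out the Fubini/absolute-convergence justification and the $t=p/(2^mq)$ case via \eqref{eq:other}, which the paper leaves implicit, but the route is the same.
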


\begin{proof}
A straightforward calculation using Theorem \ref{conj1}, where
$$g(z)=\sum_{n=1}^{\infty}a_nz^n,\; \omega(t)=\sum_{k=-\infty}^{\infty}c_ke^{2k\pi i t/\ln M}$$
The rate of decay of the coefficients follows immediately from Theorem
\ref{conj1} (i) and Corollary \ref{C1}.
\end{proof}

We note that, in some cases including
the interior of the main cardioid of $\mathcal{M}$, the expansion (\ref{trans}) converges on $\partial \mathbb{D}$ as well (though, of course, slower). This is a consequence of the
Dirichlet-Dini theorem and the H\"older continuity of
$\omega$ in the closure of its analyticity domain (by \eqref{eq:eqo})
and of the H\"older continuity of $\varphi$, shown, e.g., in \cite{CH}).

\begin{Note}{\rm Self-similarity is manifest in (\ref{eq:formvarphi}). Indeed, since
    $\omega$ is periodic and $g'\ne 0$ (see the proof of Lemma \ref{NFC}
    below), $\omega$ can be determined from any sufficiently large piece of
    $J$. Then, (\ref{eq:formvarphi}) shows that, up to conformal
    transformations and rescaling, this piece is reproduced in a
    neighborhood of any periodic point.}
\end{Note}

  We see that
\begin{equation}
  \label{eq:eqo}
  {\omega}(\ln s)= s^{-b}g^{-1}({\phi}(z)-L_t)
\end{equation}

\begin{Note}[Evaluating the transseries coefficients]\label{NN3} {\rm  There are many ways to
obtain the coefficients in \eqref{trans}. A natural way is the following.
(i) First, the series of $g$ is found by simply iterating the contractive map in Lemma \ref{NFC} below; the series
for $g^{-1}$ is calculated analogously.

(ii) The relation (\ref{eq:eqo}), together with the truncated Laurent
series of $\varphi$,   can be used
 over one period
of $\omega$ inside
the domain of convergence of the series
of $g^{-1}$, to determine a sufficient number of Fourier coefficients
of $\omega$. The numerically optimal period depends of course on the value of $c$.

The accuracy of (\ref{trans}) increases as the
boundary point is approached.
}

\end{Note}

\begin{Note}\label{N17}{\rm
The list below gives
 $q$ (in brackets), together with the  period of $1/q$   in base $2$
(underbraces).
\begin{equation}
  \label{eq:ordn}
  \underbrace{[3]}_2, \underbrace{[7]}_3, \underbrace{[5, 15]}_4, \underbrace{[31]}_5,
\underbrace{[9, 21, 63]}_6, \underbrace{[127]}_7,
\underbrace{[17, 51, 85, 255]}_8,...
\end{equation}
where more than one denominator indicates that $2^N-1$ is not prime,
and for each prime factor of $2^N-1$ we obviously get different
periodic orbits.  }
\end{Note}

\begin{Example}
  For $\lambda=0.9$ we get the following rounded off values of $b$  indexed by $N=2,3,...$ (note that cusps are generated
iff $\Re b<1$):
\begin{multline}
  \label{eq:angles}
  [0.13],[1.16],[1.08-0.145i, 1.08+0.15i],[0.98-0.19i, 0.98+0.19i, 1.09],\\
[0.904-0.21i, 0.904+0.21i, 1.04-0.069i, 1.04+0.069i, 1.12-0.089i, 1.12+0.089i]...
\end{multline}
with $\beta_t:=\Re \, b_t$ clearly given by
 \begin{equation}
  \label{eq:angles1}
  [0.13],[1.16],[1.08],[0.98, 1.09],
[0.904, 1.04, 1.12]...
\end{equation}
\end{Example}
\begin{Note}
  {\rm  Along the periodic orbit $L_t,P(L_t),...,P_{N-1}(L_t)$ we have $P'_N=const$
and $b=const$.
  Indeed, this follows from the fact that $P'_N(L_t)=P'(L_t)\cdots P'_{N-1}(L_t)$  is invariant
under cyclic permutations.}
\end{Note}

Part of Theorem \ref{conj1} follows from the more general result below. It
is convenient to map the problem to the right half plane, by writing $\varphi(z_0 e^{-t})=\varphi(z_0)+F_0(t)$.
\begin{Assumptions}
  (i) Let $A$ and $F_0$ be  analytic in the right half plane $\mathbb{H}$ and assume that for
some $n>1$
it satisfies the functional relation
\begin{equation}
  \label{eq:eqFnx}
  F_0(nx)=A(F_0(x))
\end{equation}
(ii) Assume that $F_0\to 0$ along any curve lying in a Stolz angle in $\mathbb{H}$
(nontangential limit, see
\cite{Conway}; this is the case for instance if $F_0$ is bounded near zero
and $F_0\to 0$ along some particular nontangential ray).

(iii) Assume that $|w|>1$, where $w=A'(0)$ (note that, by (i) and (ii), $A(0)=0$).
\end{Assumptions}

\begin{Theorem}\label{sc}
Under the assumptions above, there exists a unique  analytic function $g$,
with $g(0)=0$ and $g'(0)=1$, and a
multiplicatively periodic function $h$: $h(nx)=h(x)$, analytic in
  $\mathbb{H}$ so that, for sufficiently
small $x$, $F_0$ is of the form (see (\ref{eq:valb}))
\begin{equation}
  \label{eq:eqform}
  F_0(x)=g\left(x^{\log_n w}h(x)\right)
\end{equation}

Moreover, if $A$ is an entire function then $g$ is also an entire function, and the above expression is valid for all $x\in\mathbb{H}$.
  \end{Theorem}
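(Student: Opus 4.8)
The plan is to read \eqref{eq:eqform} as a linearization statement: the substitution $H=g^{-1}\circ F_0$ should convert the functional relation \eqref{eq:eqFnx} into the purely multiplicative relation $H(nx)=wH(x)$, from which the factor $x^{\log_n w}$ and the multiplicatively periodic function $h$ can be read off directly. First I would construct $g$. Since $A(0)=0$, $w=A'(0)$ and $|w|>1$, the germ of $A$ at $0$ is repelling, equivalently the germ of $A^{-1}$ at $0$ is attracting with multiplier $w^{-1}$; I would invoke Koenigs' theorem to get a unique germ $g$ at $0$ with $g(0)=0$, $g'(0)=1$ and $g(wu)=A(g(u))$, which can equally be obtained as the fixed point of a contraction (as in Lemma~\ref{NFC}). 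When $A$ is entire, the identity $g(u)=A^{\circ k}\bigl(g(u/w^{k})\bigr)$ — valid wherever both sides make sense and consistent in $k$ — continues $g$ to $|u|<|w|^{k}\rho$ for every $k$, where $\rho$ is the radius of convergence of the germ; since $|w|>1$ this exhibits $g$ as an entire function.

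Next I would build $H$, $h$ and the formula. By hypothesis (ii) $F_0$ has nontangential limit $0$ at the origin, so on $S_\theta\cap\{|x|<\delta\}$, where $S_\theta=\{|\arg x|\le\theta\}$ with $\theta<\pi/2$ and $\delta=\delta(\theta)$ small, $F_0$ takes values in the disk of convergence of the germ $g^{-1}$; hence $H:=g^{-1}\circ F_0$ is analytic there. Applying $g^{-1}$ to \eqref{eq:eqFnx} and using $g^{-1}(A(v))=w\,g^{-1}(v)$ from the previous step gives $H(nx)=wH(x)$ on this truncated Stolz angle; iterating this relation continues $H$ consistently to all of $S_\theta$, and, letting $\theta\uparrow\pi/2$, to all of $\mathbb{H}$, the relation $H(nx)=wH(x)$ then holding throughout $\mathbb{H}$ by the identity theorem. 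Using the principal branch of $\ln$ in $\mathbb{H}$ (legitimate since $\mathbb{H}$ is simply connected and $0\notin\mathbb{H}$) I would set $h(x):=x^{-\log_n w}H(x)$; then $h$ is analytic in $\mathbb{H}$, and since $n^{\log_n w}=e^{(\ln w/\ln n)\ln n}=w$ one gets $h(nx)=(nx)^{-\log_n w}H(nx)=w^{-1}\,x^{-\log_n w}\,wH(x)=h(x)$. Finally $F_0=g\circ H=g\bigl(x^{\log_n w}h(x)\bigr)$ for $x$ small, where $H(x)$ lies in the domain of the germ $g$; when $A$ and hence $g$ are entire, both sides are analytic on all of $\mathbb{H}$ and agree near $0$, so they agree on $\mathbb{H}$.

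For uniqueness, given any representation $F_0(x)=g\bigl(x^{\alpha}h(x)\bigr)$ with $\alpha=\log_n w$ and the stated normalizations, I would substitute into \eqref{eq:eqFnx} and use $h(nx)=h(x)$, $n^{\alpha}=w$ to force $g(wu)=A(g(u))$ for $u$ in the image $U$ of a small truncated Stolz angle under $x\mapsto x^{\alpha}h(x)=g^{-1}(F_0(x))$; by the open mapping theorem $U$ is open and nonempty, and it accumulates at $0$ because $F_0\to0$ nontangentially. Hence $g(wu)-A(g(u))$, analytic near $0$, vanishes on a nonempty open set and so vanishes identically near $0$, whence $g$ is \emph{the} Koenigs linearizer of $A$ normalized by $g(0)=0$, $g'(0)=1$, which is unique. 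Then $H=g^{-1}\circ F_0$ is forced near $0$, hence on $\mathbb{H}$ via $H(nx)=wH(x)$, and $h=x^{-\alpha}H$ is forced; the case $F_0\equiv0$ is trivial.

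The hard part will not be the linearization itself but the behaviour at the boundary: converting the bare nontangential-limit hypothesis (ii) into a genuine truncated Stolz angle on which $F_0$ is small enough to be composed with the germ $g^{-1}$, and then patching the multiplicative continuations so that $H$ — and therefore $h$ — is single-valued and analytic on \emph{all} of $\mathbb{H}$, up to the imaginary axis, with the branch of $x^{\log_n w}$ chosen so that $h(nx)=h(x)$ holds exactly rather than merely up to a root of unity.
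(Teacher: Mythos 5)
Your proof follows essentially the same route as the paper: build the Koenigs linearizer $g$ of $A$ near $0$ (the paper does this by the contraction argument of Lemma~\ref{NFC}, you observe it is the standard Koenigs theorem, and both use the repelling functional relation to continue $g$ to an entire function when $A$ is entire), then set $H=g^{-1}\circ F_0$, derive $H(nx)=wH(x)$, and peel off $h(x)=x^{-\log_n w}H(x)$. You spell out two points the paper leaves implicit — the propagation of analyticity of $H$ from a small truncated Stolz angle to all of $\mathbb{H}$ via iterated use of $H(nx)=wH(x)$, and the uniqueness of the pair $(g,h)$ — but the decomposition and the key lemma are the same, so this is a faithful reconstruction rather than a different argument.
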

\begin{Note}[Connection between transseries and local  angles] {\rm We see,
using Theorem \ref{conj1} (i), that in a neighborhood of a point
$t$ of period $M=2^N$, $J$ is the image of a small arc of a circle, or equivalently
 of a segment $[-\epsilon,\epsilon]$ under a transformation
of the form $\zeta\mapsto \zeta^{\beta+i\Im b}F(\zeta)$ ($\beta=\Re\,\, b$) with
$F$ multiplicatively periodic. In an averaged sense (over many periods),
or of course if $\Im\, b=0$ and $F$ is a constant, $\beta$ is the cusp
at $L$; in general, the shape is a spiral. }
\end{Note}
\begin{figure}
  \centering
  \includegraphics[scale=0.6]{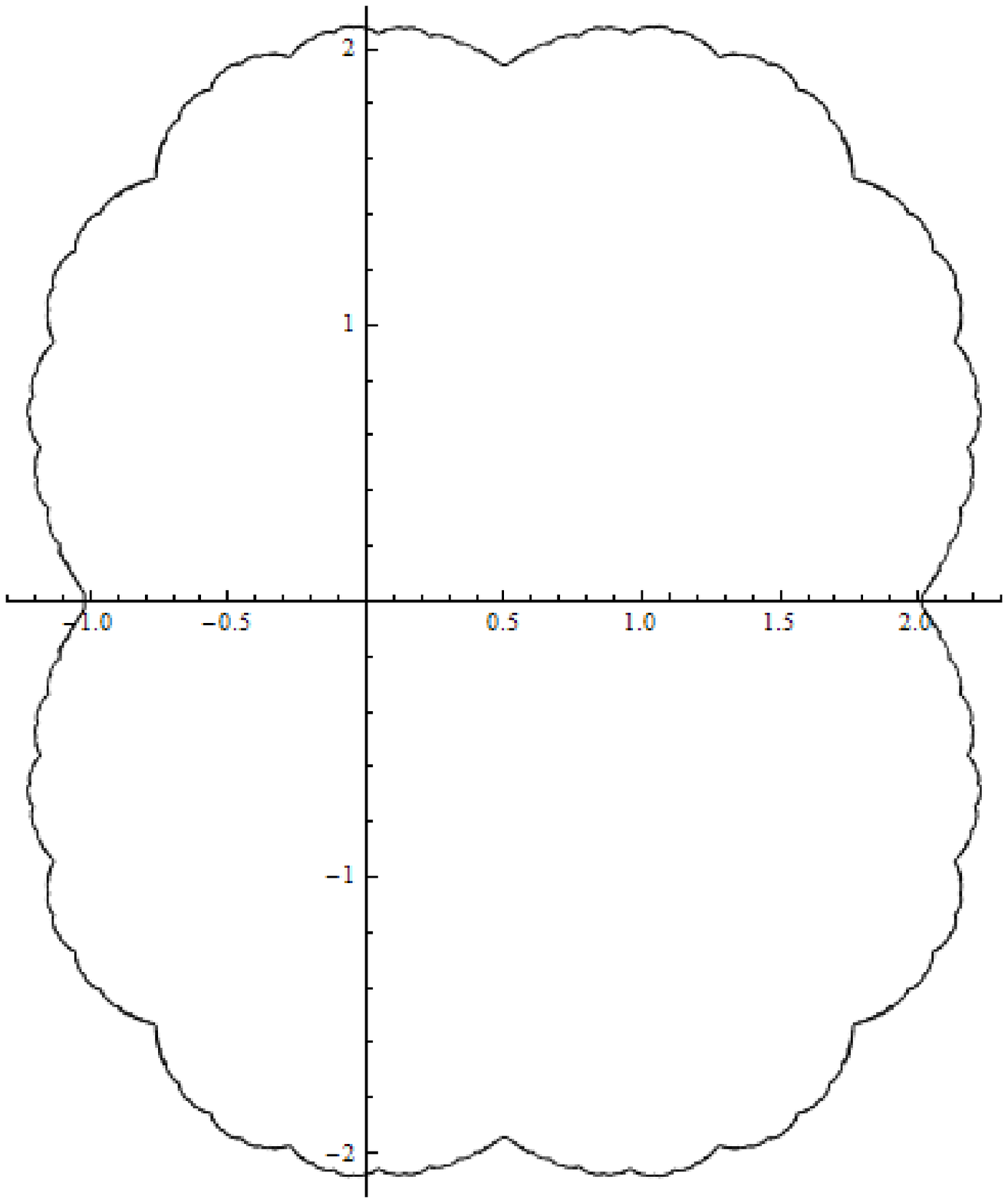}\includegraphics[scale=0.6]{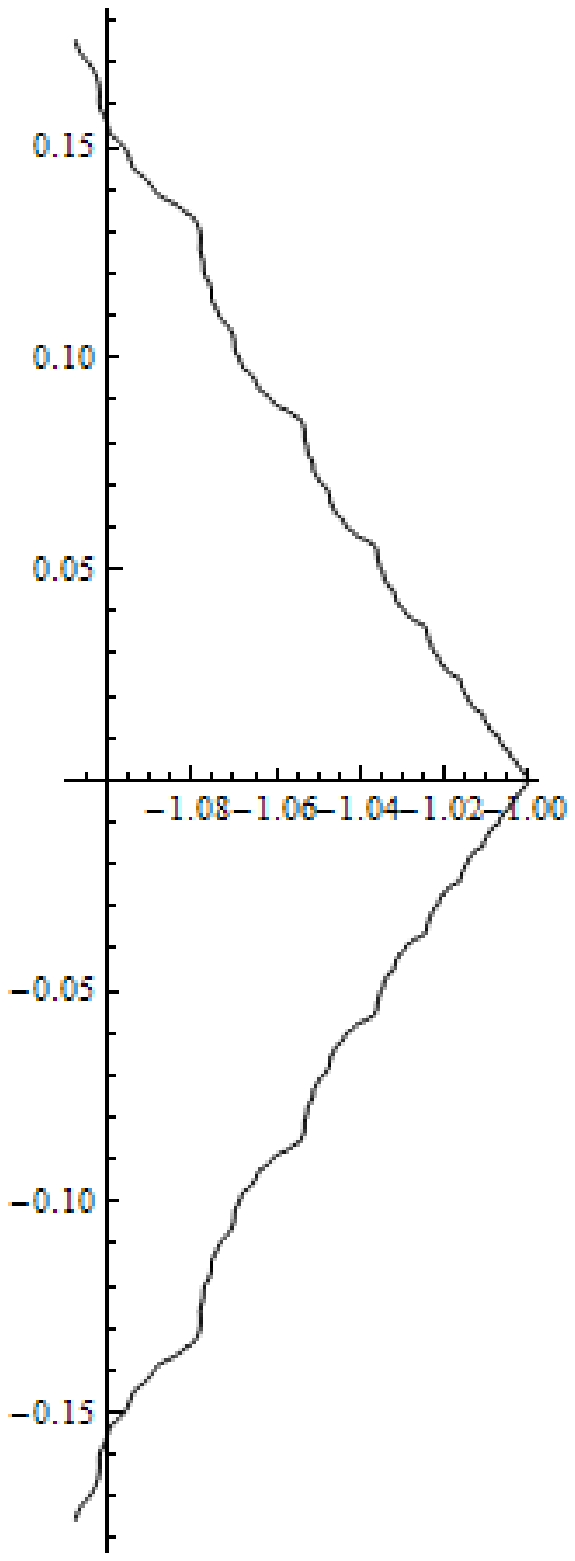}
  \caption{The Julia set for $\lambda=0.5$ (left figure), plotted by combining
rescaled ``bricks''. The ``brick'' (right): the local
shape obtained from  transseries expansion at $1$ .}\label{j1}
\end{figure}
\begin{figure}
  \centering
  \includegraphics[scale=0.6]{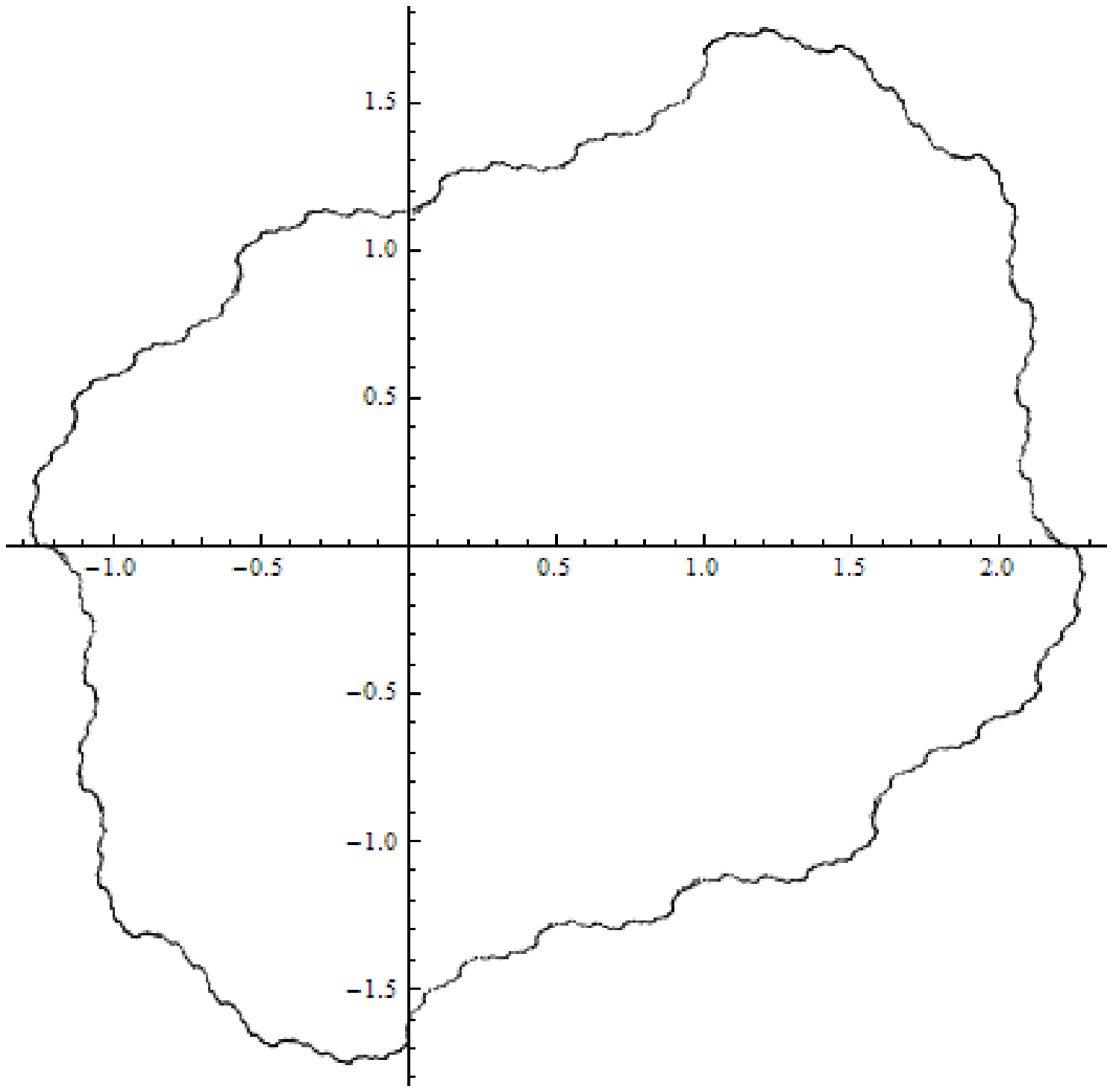}
  \caption{The Julia set for $\lambda=0.5i$, obtained from the transseries
as Fig. \ref{j1}.}\label{ji}
\end{figure}
\begin{figure}
  \centering
  \includegraphics[scale=0.6]{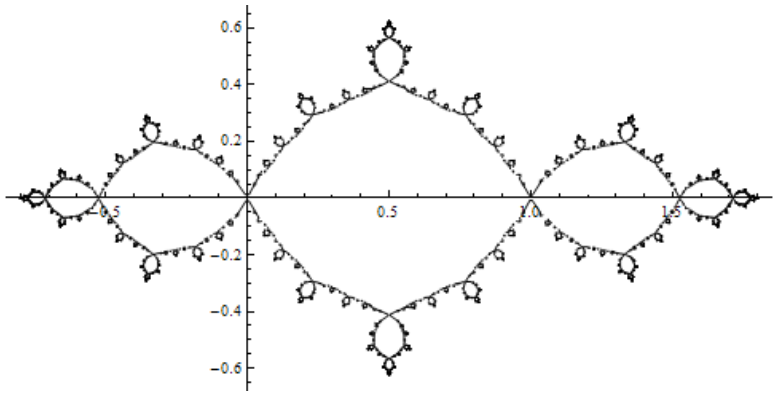}
  \caption{The Julia set for $\lambda=-1.25$, obtained from the transseries
as Fig. \ref{j1}.}\label{j2}
\end{figure}
\subsubsection{The average branching}
The critical point is outside $J$ (this is easy to show; see also the proof
of Proposition \ref{Bb}).  By  continuity, zero is outside $P'(J)$; by the argument principle,
$\Im\ln ( P'(J))$ is bounded by $2\pi$ and  $ P'(\varphi(e^{2\pi i x}))$
is bounded below and continuous.
Using  Proposition \ref{Bb} and the continuity of $\varphi$  on $\partial \mathbb{D}$ we see that the following holds.
\begin{Corollary}
  The average $b$,
  \begin{equation}
    \label{eq:eqbe1}
    b_E=\int_0^1\ln P'(\varphi(e^{2\pi i x})) dx
  \end{equation}
with the natural branch of the log, is well defined.
\end{Corollary}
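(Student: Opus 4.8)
The plan is to check that, for the natural branch of the logarithm, the integrand in \eqref{eq:eqbe1} is a continuous (hence bounded) and $1$-periodic function of $x$, so that the integral over a period exists. First I would record continuity and periodicity of the integrand: by the last item of Remark \ref{R7}, for $c$ in a hyperbolic component the map $\varphi$ of \eqref{eq:eqvphi} is continuous on $\overline{\mathbb{D}}$, and since $P'$ is a polynomial the function $g(x):=P'(\varphi(e^{2\pi i x}))$ is continuous and $1$-periodic on $\RR$, with $g([0,1])=P'(J)$ compact.

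Next I would observe that $g$ has no zeros. The unique critical point of $P$ is $x_c=\tfrac12$, the root of $P'$; as asserted above (and as follows from the proof of Proposition \ref{Bb}) the critical point lies in the Fatou set, so $x_c\notin J=\varphi(\partial\mathbb{D})$, whence $g(x)\neq 0$ for all $x$. By compactness of $J$ there are constants $0<m\le M<\infty$ with $m\le|g(x)|\le M$. In particular $\Re\ln g(x)=\ln|g(x)|$ is $1$-periodic with values in $[\ln m,\ln M]$, independently of any branch choice, so $\Re\,b_E$ --- the only quantity entering $D_H\ge 1/\Re\,b_E$ --- is unambiguously defined. Since the closed curve $x\mapsto g(x)$, $x\in[0,1]$, avoids $0$, there is a continuous determination of $\ln g$ along it, made unique by fixing the principal value at $x=0$; this is the natural branch. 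Being continuous on the compact interval $[0,1]$ it is automatically bounded (the argument principle further shows its increment around the loop is $2\pi$ times the number of zeros of $P'$ enclosed by $J$, hence $2\pi$, matching the remark that $\Im\ln(P'(J))$ is bounded by $2\pi$). A continuous function on $[0,1]$ is integrable, so $b_E$ is well defined.

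As for difficulty: the two substantive inputs --- continuity of $\varphi$ up to $\partial\mathbb{D}$ and the fact that the critical point misses $J$ --- are supplied by Remark \ref{R7} and Proposition \ref{Bb}. Granting these, the only point needing attention is that the multivaluedness of $\log$ is harmless along $P'(J)$, i.e.\ that the loop winds a bounded number of times around $0$; since that winding number is $1$ (only the critical point is enclosed), I do not expect a genuine obstacle beyond this bookkeeping.
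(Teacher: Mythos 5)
Your proposal follows essentially the same route as the paper: both invoke Proposition \ref{Bb} to get the critical point off $J$, hence $0\notin P'(J)$, and the continuity of $\varphi$ on $\overline{\mathbb{D}}$ (last item of Remark \ref{R7}) to make $x\mapsto P'(\varphi(e^{2\pi i x}))$ a continuous, nowhere-vanishing loop, along which a continuous branch of $\ln$ exists and is bounded; integrability follows. The one place where your bookkeeping is off (though harmless) is the winding-number count: the argument principle should be applied to $P'\circ\varphi$ on $\partial\mathbb{D}$ rather than to $P'$ along $J$. Since $\varphi$ has a simple pole at $0$ and $P'(\varphi(z))$ has no zeros in $\mathbb{D}$ (the critical point is not in the basin of infinity), the winding of $P'(\varphi(e^{2\pi i x}))$ about $0$ is $-1$, not $+1$; equivalently, $\varphi$ traverses $J$ clockwise because the image of $\mathbb{D}$ is the \emph{unbounded} complementary component. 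This formulation also sidesteps the fact that $J$ need not be a Jordan curve in hyperbolic components other than the main cardioid, so the ``zeros of $P'$ enclosed by $J$'' picture is not quite available in general. Relatedly, a continuous branch of $\ln g$ along $[0,1]$ picks up the increment $-2\pi i$, so the integrand itself is not literally $1$-periodic; but continuity and boundedness on the compact interval $[0,1]$ already give the well-definedness claimed.
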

Let $\beta_E=\Re \,\,b_E$.
\subsection{Recursive construction of $J$}
We say that a real number
is ``$(\epsilon,N,m)$-normal'' on the
initial set of $N$ bits if any block of bits
of length $m$ of itself and its $m$ binary left shifts ($2x\mod 1$)
appears with a relative frequency $1/Q$  within $\epsilon/Q$ errors, where $Q=2^m$.
Consider the set of numbers $\mathcal{N}_{N,m,\epsilon}$ in $[0,1]$ which are not
$(\epsilon,N,m)$-normal. The total measure Lebesgue measure of this set is estimated by,
see \S\ref{S3.1},
\begin{equation}
  \label{eq:nme}
  \text{ meas }( \mathcal{N}_{N,m,\epsilon})\le  2Qme^{-2N\epsilon^2/Q^2}
\end{equation}
Consequently, for large $N_0$, we have
\begin{equation}
  \label{eq:nme1}
  \text{ meas }\left(\mathcal{N}_{[N_0],m,\epsilon} \right)=O(Qme^{-2N_0\epsilon^2/Q^2})
\end{equation}
where
$$ \mathcal{N}_{ [N_0],m,\epsilon}=\bigcup_{N \ge N_0} \mathcal{N}_{N,m,\epsilon}$$
The complement set $ \mathcal{N}^c_{ [N_0],m,\epsilon}$ can be obtained by excluding
from $[0,1]$ intervals of size $2^{-Nm}$ around each binary rational
with $Nm$  bits,  which is not $(\epsilon,N,m)$-normal.

We denote as usual the Hausdorff dimension of $J$ by $D_H$.

\begin{Theorem}
  Consider the curve $\tilde{J}$ obtained from $J$ by eliminating the binary rationals,  modifying $\varphi$ in the following way. Define $\tilde{\varphi}=\varphi$ at all points $e^{2\pi i z}$
with $z\in \mathcal{N}^c_{ [N_0],m,\epsilon} $. On the excluded intervals, $\tilde{\varphi}$
is simply defined by linear endpoint interpolation (cf.
(\ref{eq:strtl})). Let $\tilde{J}=\tilde{\varphi}(\partial\mathbb{D})$. Then, for
any $\varepsilon>0$,

(i) The function $\tilde{\varphi}$ is H\"older continuous
of exponent at least $\beta_E-\varepsilon$.

(ii) The Hausdorff
dimension of the graph of $\tilde{\varphi}$ is less than
$2-\beta_E+\varepsilon$. Here $\beta_E\ge 1/2$, see Note \ref{Note15}.
\end{Theorem}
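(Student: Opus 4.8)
The plan is to reduce both parts to one modulus–of–continuity estimate for $\varphi$ along $\partial\mathbb{D}$, restricted to the good angles $\mathcal{N}^c_{[N_0],m,\epsilon}$, obtained by iterating the B\"otcher relation \eqref{eq:eqvphi} and using hyperbolicity; once that is in hand, (i) follows after checking that the affine interpolation on the excluded components does not degrade it, and (ii) is the classical bound relating H\"older exponents to Hausdorff dimension of graphs. Parametrise $\partial\mathbb{D}$ by $x\in[0,1)$ and put $\psi(x)=\ln|P'(\varphi(e^{2\pi i x}))|$, which by the discussion preceding \eqref{eq:eqbe1} is continuous and bounded on the circle with $\int_0^1\psi=\beta_E$. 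Iterating \eqref{eq:eqvphi} gives $P_n(\varphi(z))=\varphi(z^{2^n})$, so for $z=e^{2\pi i x},\ w=e^{2\pi i y}$ nearby one has $\varphi(e^{2\pi i2^nx})-\varphi(e^{2\pi i2^ny})=P_n(\varphi(z))-P_n(\varphi(w))$. Since $c$ lies in a hyperbolic component, the critical point, hence the whole postcritical set, stays a fixed distance from $J$, so there are $r_0>0$ and $C_0<\infty$ such that for every $\zeta\in J$ and every $n$ the branch of $P_n^{-1}$ through a preimage of $\zeta$ is univalent on $B(\zeta,r_0)$ with distortion $\le C_0$ (the standard bounded–distortion property of hyperbolic polynomials). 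I would then choose $n=n(x,y)$ so that the level-$n$ images $\varphi(e^{2\pi i2^nx})$, $\varphi(e^{2\pi i2^ny})$ are separated by an amount both $\le r_0$ and bounded below by a fixed positive constant — possible by placing the angular separation $2^n|x-y|$ in a suitable fixed range and using that $\varphi|_{\partial\mathbb{D}}$ is a bi-uniformly-continuous homeomorphism onto $J$. Pulling back with the inverse branch from level $n$ to level $0$ then gives, with absolute $C$,
\begin{equation}\label{eq:sketchkey}
C^{-1}\prod_{j=0}^{n-1}\big|P'(\varphi(e^{2\pi i2^jx}))\big|^{-1}\ \le\ |\varphi(z)-\varphi(w)|\ \le\ C\prod_{j=0}^{n-1}\big|P'(\varphi(e^{2\pi i2^jx}))\big|^{-1},
\end{equation}
while $n=\log_2(1/|x-y|)+O(1)$; the degenerate cases $x\equiv\pm y$ and angle wrap-around are absorbed into the $O(1)$ and the constants.

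\emph{Normality.} The product in \eqref{eq:sketchkey} is $\exp(-S_n(x))$ with $S_n(x)=\sum_{j=0}^{n-1}\psi(2^jx\bmod 1)$, a Birkhoff sum for the doubling map. Given a target $\varepsilon>0$ I would fix $m$ so large that $\psi$ oscillates by $<\varepsilon/2$ on every dyadic interval of length $2^{-m}$, and then $\epsilon$ so small that $\epsilon\|\psi\|_\infty<\varepsilon/2$. Replacing $\psi$ by its value on the first $m$ bits and using that $x$ is $(\epsilon,N,m)$-normal for all $N\ge N_0$ — so each length-$m$ block occurs among the first $n$ positions with frequency $2^{-m}(1+O(\epsilon))$ — turns $S_n(x)$ into a Riemann sum for $\int_0^1\psi=\beta_E$, so that $|S_n(x)-n\beta_E|\le n\varepsilon$ for $x\in\mathcal{N}^c_{[N_0],m,\epsilon}$ and $n\ge N_0$. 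Feeding this into \eqref{eq:sketchkey} with $2^{-n}\asymp|x-y|$ yields $|\varphi(z)-\varphi(w)|\asymp|x-y|^{\beta_E-\varepsilon}$ for good, nearby $x,y$ (the finitely many scales $n<N_0$ only affect the constant); in fact the Birkhoff computation delivers exponent $\beta_E/\ln 2-\varepsilon$, so there is a genuine gap $\beta_E(1-\ln2)/\ln2$ over the claimed $\beta_E-\varepsilon$, which is what produces the strict inequality in (ii). Since \eqref{eq:sketchkey} is two-sided, the same argument also gives the matching lower bound $|\varphi(z)-\varphi(w)|\gtrsim|x-y|^{\beta_E+\varepsilon}$.

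\emph{Interpolation, and the dimension bound.} The excluded set $\mathcal{N}_{[N_0],m,\epsilon}$ is open, so each component $I$ is an interval whose two endpoints lie in $\mathcal{N}^c$; with $\ell=|I|$, the $\varphi$-images of those endpoints differ by $\le C\ell^{\beta_E-\varepsilon}$ by the estimate above, and on $I$ the map $\tilde\varphi$ is the segment joining them. An affine map agreeing at the ends of an interval of length $\ell$ with data varying by $\le C\ell^{\beta_E-\varepsilon}$ is H\"older-$(\beta_E-\varepsilon)$ on $I$ with constant $\le C$ (using $\beta_E-\varepsilon\le 1$); combining with the estimate on $\mathcal{N}^c$ and routing an arbitrary pair of points through an intermediate endpoint gives (i). For (ii) I would invoke the standard covering bound: a function on an interval that is H\"older continuous of exponent $\alpha$ has graph of upper box — hence Hausdorff — dimension at most $2-\alpha$, because over each of the $\delta^{-1}$ parameter intervals of length $\delta$ the graph sits in a box of height $\lesssim\delta^{\alpha}$, coverable by $\lesssim\delta^{\alpha-1}$ squares of side $\delta$; with $\alpha=\beta_E-\varepsilon$ this is $2-\beta_E+\varepsilon$. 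The two-sided estimate and the conformality of $\varphi$ (we are dealing with a Riemann-map boundary value, not a generic H\"older curve) are what keep the exponent at $2-\alpha$ for the $\mathbb{C}$-valued graph and yield the strict inequality; $\beta_E\ge 1/2$ is Note~\ref{Note15}.

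\emph{Main obstacle.} The delicate step is the uniform bounded-distortion input behind \eqref{eq:sketchkey}: one needs, with constants independent of $\zeta\in J$ and of $n$, that the relevant branch of $P_n^{-1}$ is univalent on a fixed ball $B(\zeta,r_0)$ with controlled distortion — this is exactly where hyperbolicity (postcritical set away from $J$, uniform expansion in a conformal metric) enters, and it is the technical heart. A secondary subtlety is the quantifier order in the normality step: $m$ and $\epsilon$ must be chosen as functions of the target $\varepsilon$ before the Birkhoff sum can be controlled uniformly over $\mathcal{N}^c_{[N_0],m,\epsilon}$, and the finitely many large scales $n<N_0$ must be dispatched separately. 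Everything else — the affine-interpolation bookkeeping and the covering estimate — is routine.
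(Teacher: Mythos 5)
Your proposal is essentially correct and proves the theorem, but it takes a genuinely different route from the paper. The paper differentiates the B\"otcher relation to obtain the chain-rule identity~\eqref{eq:qq} for $\varphi'$, works \emph{radially inside} the disk at $\rho=1-2^{-M}\epsilon_3$, uses normality together with continuity of $\log P'(\varphi)$ to bound the Blaschke-type product in~\eqref{eq:qq}, and then integrates the resulting pointwise bound~\eqref{eq:phip} on $|\varphi'|$ to get H\"older continuity on $\mathcal{N}^c_{[N_0],m,\epsilon}$; the interpolation and graph-dimension steps then match yours (the paper quotes Przytycki--Urbanski for the dimension bound). You instead work directly on $\partial\mathbb{D}$, pull back with inverse branches of $P_n$, and invoke the bounded-distortion lemma for hyperbolic polynomials (postcritical set at positive distance from $J$, Koebe distortion on a uniform ball) to get the two-sided estimate~\eqref{eq:sketchkey}, which you then combine with the same Birkhoff-sum control from normality. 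Both approaches hinge on the same Birkhoff average; yours buys you a clean two-sided comparison $|\varphi(z)-\varphi(w)|\asymp|x-y|^{\beta_E\pm\varepsilon}$ at the cost of citing the distortion lemma as a black box, while the paper's derivative argument avoids distortion but needs a slightly careful choice of the radial parameter $\epsilon_3$ and large $M$.

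Two of your side remarks, however, are confused. First, the ``genuine gap of $\beta_E(1-\ln 2)/\ln 2$'' is an artifact of the inconsistent normalization in the source: \eqref{eq:eqbe1} is written with $\ln$, whereas~\eqref{eq:valb}, the inequality~\eqref{eq:eqint32}, and the rest of the argument use $\log_2$. Once $b_E$ is read in the $\log_2$ normalization (the one that makes $b(L_t)$ in~\eqref{eq:valb} an honest average of $b_E$-type quantities), your Birkhoff computation delivers exactly the exponent $\beta_E-\varepsilon$, with no gap. In particular this ``gap'' does not, and need not, ``produce the strict inequality in (ii)'': that strictness is purely the $\varepsilon$ wiggle room. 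Second, the upper box- (hence Hausdorff-) dimension bound $\dim(\mathrm{graph}\,\tilde\varphi)\le 2-\alpha$ for an $\alpha$-H\"older $\mathbb{C}$-valued function is the classical covering estimate and requires neither the two-sided comparison nor any conformality input; those hypotheses would be relevant only if one wanted a \emph{lower} bound on a dimension, which the theorem does not assert.

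Finally, a small quantifier point worth stating explicitly: in your normality step you choose $m$ and $\epsilon$ as functions of the target $\varepsilon$ \emph{and then} take $N_0$ large; this is the same order the paper implicitly uses in passing from~\eqref{eq:Pepsi} to~\eqref{eq:b-be}, so your handling is fine, but since the set $\mathcal{N}^c_{[N_0],m,\epsilon}$ in the statement is fixed before ``for any $\varepsilon>0$'' is quantified, one should say that the conclusion holds for all $\varepsilon$ down to a threshold determined by $(m,\epsilon)$, with the threshold going to $0$ as $m\to\infty,\ \epsilon\to 0$; this matches the intent of the theorem and of the construction in \S\ref{S3.1}.
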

\begin{Note}
(i)
  {\em The Hausdorff dimension  of $\tilde{J}$ is lower than that
of $J$. Indeed, this follows from $D_H\ge 1/\beta_E$, see (\ref{eq:estD}) below, and
$$2-x < 1/x, \ \ \ x\in (0,1)$$

(ii) Also, in large sections of the Mandelbrot set,
including the main cardioid, the regularity of $\tilde{\varphi}$ is strictly
better than that of $\varphi$.

 In this sense, both the geometry (through regularity) and the Hausdorff
dimension come from rational angles (more precisely,
from the angles with non-normal distribution of digits
in base $2$).}

\end{Note}
\subsection{Hausdorff dimension versus angle distribution}

Through the Ruelle-Bowen formula we see that  $D_H$   can be seen as  ``inverse
temperature''\footnote{The terminology is motivated by the formula
  $\partial S/\partial E=T$, in units where $k_B=1$.}  of the cusp
system.

\z{\bf Notations} (See the proof of Proposition \ref{PP9} below for
more details.)  Let $\mu_n(\beta)=\text{prob}(\{z\in \text{fix}\, P_n:\beta(z)\le \beta\}$, where the probability is taken with respect to
the counting measure and let $F_n=\mu_n^{1/n}$. Let $w=P'(0)$. Note that $F_n\in
[0,1]$ are monotone (increasing) functions and right-continuous.  Define
$\overline{F}=\limsup _{n\to\infty}F_n$ and $\underline{F}=\liminf
_{n\to\infty}F_n$, and denote, as usual for monotone functions,
${F_{+}(x)}=F(x+ 0)$ (the function $F_+$ is clearly right
 continuous.)

Define $\Phi=-\log_{2} F\in [0,\infty]$ (similarly,
$\overline{\Phi}=-\log_2 \overline{F}$ etc.) and let $\Phi^\star(t)=\max(ts-\Phi(s))$\footnote{$\Phi^\star$ is the convex transform (Legendre transform if $\Phi$ is convex) of $\Phi$.}
\begin{Proposition}\label{PP9}
  We have
  \begin{equation}
    \label{eq:deftemp}
    \overline{\Phi}_+^\star(-D_H)= \underline{\Phi}_+^\star(-D_H)=-1
  \end{equation}
  \end{Proposition}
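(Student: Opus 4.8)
The plan is to derive Proposition \ref{PP9} as a large-deviations/thermodynamic-formalism identity relating the Hausdorff dimension $D_H$ to the asymptotic distribution of the local exponents $\beta(z)$ over periodic points. First I would recall the Ruelle–Bowen pressure characterization of $D_H$: for hyperbolic components, $D_H$ is the unique $s$ with topological pressure $P(-s\log|P'|)=0$, and by the variational principle this pressure equals the $n\to\infty$ limit of $\frac1n\log\sum_{z\in\mathrm{fix}\,P_n}|P_n'(z)|^{-s}$, with the sum over repelling $n$-cycles. Since on a cycle $\frac1n\log|P_n'(z)|=\beta(z)$ (up to the bounded imaginary part discussed before the statement, which is irrelevant for the modulus), this Birkhoff-type sum is exactly $\sum_{z\in\mathrm{fix}\,P_n} 2^{-ns\beta(z)}=\int 2^{-ns\beta}\,d\mu_n(\beta)$. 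Using the counting normalization $\mu_n$, $F_n=\mu_n^{1/n}$, and $\Phi=-\log_2 F$, the integral $\frac1n\log_2\!\int 2^{-ns\beta}d\mu_n$ is, by Laplace's method on the Stieltjes measure, asymptotically $\max_\beta\big(-s\beta+(\text{exponential growth rate of }\mu_n\text{ near }\beta)\big)$, i.e. $\max_\beta(-s\beta-\Phi_{n,+}(\beta))$ in the limit.

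Next I would make the Laplace/Varadhan step precise. The function $\Phi_+$ (right-continuous regularization) plays the role of a rate function; the quantity $\limsup_n \frac1n\log_2\int 2^{-ns\beta}d\mu_n$ is the Legendre-type transform evaluated at $-s$, namely $\overline{\Phi}_+^\star(-s)=\sup_\beta(-s\beta-\overline{\Phi}_+(\beta))$, and similarly with $\liminf$ and $\underline{\Phi}_+$. Here I must be careful about which of $\overline{\Phi}$, $\underline{\Phi}$ appears: an upper bound on the partition sum is controlled by $\overline{F}$ (largest subsequential density), a lower bound by $\underline{F}$; the point of the monotonicity of $F_n$ and right-continuity (noted in the paper) is exactly that these transforms coincide after taking $\Phi_+$, because a monotone function and its right-continuous version have the same convex/Legendre transform, and the $\limsup$/$\liminf$ gap closes under the transform. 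So both $\overline{\Phi}_+^\star(-D_H)$ and $\underline{\Phi}_+^\star(-D_H)$ equal $\lim_n \frac1n\log_2 Z_n(D_H)$ where $Z_n(s)=\sum_{z}|P_n'(z)|^{-s}$.

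Finally I would pin the value to $-1$. By the Ruelle–Bowen/Bowen formula $\lim_n \frac1n\log Z_n(D_H)=P(-D_H\log|P'|)=0$ in natural logarithm; but the normalization here uses $\log_2$ and the measure $\mu_n$ is the \emph{probability} counting measure, so $\mu_n=\big(\#\mathrm{fix}\,P_n\big)^{-1}\sum\delta$, and $\#\mathrm{fix}\,P_n\sim 2^n$ for the quadratic family (degree $2$). Dividing the unnormalized partition sum by $2^n$ shifts the exponential rate by $-1$ in base-$2$ units, which is precisely the $-1$ on the right-hand side of \eqref{eq:deftemp}. Assembling: $\overline{\Phi}_+^\star(-D_H)=\underline{\Phi}_+^\star(-D_H)=\lim_n\frac1n\log_2\!\int 2^{-nD_H\beta}d\mu_n=\lim_n\frac1n\log_2\!\big(2^{-n}Z_n(D_H)\big)=-1+0=-1$.

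The main obstacle, and the step I would spend the most care on, is the rigorous Laplace/large-deviations equality for a sequence of \emph{non-converging} Stieltjes measures: one does not have a single rate function, only the $\limsup$/$\liminf$ envelopes $\overline{\Phi},\underline{\Phi}$, and one must show that passing to $\Phi_+$ (and using monotonicity) genuinely forces the two one-sided transforms to agree and to equal the partition-sum rate, rather than merely sandwiching it. This requires matching an upper bound (subexponential number of $\beta$-bins times $\max_\beta 2^{n(-s\beta-\underline\Phi_{n}(\beta))}$ type estimate) with a lower bound (restrict the integral to a shrinking neighborhood of the maximizing $\beta$ and use right-continuity of $F_+$ to keep the density from collapsing). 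One also needs the a priori input, already granted in the excerpt, that $\beta_E\ge 1/2$ and that $\beta(\cdot)=\Re b$ is bounded and bounded below (from $|P_n'|>1$ on the hyperbolic Julia set and the argument-principle bound on $\Im\ln P'$), so that the relevant range of $\beta$ is a fixed compact interval and all suprema are attained; the rest is the standard pressure/Bowen identity, which I take as known for hyperbolic rational maps.
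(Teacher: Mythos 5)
Your proposal follows essentially the same route as the paper: start from the Ruelle--Bowen identity $\sum_{z\in\mathrm{fix}\,P_n}|P_n'(z)|^{-D_H}\to 1$, rewrite the partition sum as an integral against $\mu_n$, identify $\frac1n\log_2$ of it (after dividing by $2^n=\#\mathrm{fix}\,P_n$, which produces the $-1$) with a Laplace-type maximum, and recognize that maximum as $\Phi_+^\star(-D_H)$. You also correctly isolate the one genuinely delicate step — justifying the Laplace/Varadhan equality for a \emph{non-convergent} sequence of monotone distribution functions — which is exactly what the paper handles via integration by parts, Helly selection for monotone $F_n$, the right-continuity/one-sided-H\"older estimate at the maximizer (Proposition~\ref{PP11}), and the essential-supremum convergence lemma (Lemma~\ref{Lessup}).
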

\begin{Note}
  {\rm Note also that all $b$ (cf. (\ref{eq:valb})) have nonnegative real part, since
    $|P'(L_t)|>1$ for $c$ in the hyperbolic components of  $\mathcal{M}$, as seen next}.
\end{Note}

\begin{Proposition}\label{Bb}
 In any  hyperbolic component of the Mandelbrot set there
is a $\delta>0$ so that for any $\tau\in J$  we have $|P'(\tau)|>\delta$.
\end{Proposition}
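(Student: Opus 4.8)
The plan is to argue by contradiction, using the density of repelling periodic points in $J$ together with the fact that in a hyperbolic component $P$ has a (unique) attracting cycle lying in the Fatou set, so $0$ cannot be a limit of $P'$ on $J$. First I would suppose the claim fails: there is a sequence $\tau_k\in J$ with $P'(\tau_k)\to 0$, i.e.\ $\tau_k\to \tfrac12$ (the unique critical point of $P(x)=\lambda x(1-x)$). Thus $\tfrac12\in J$. This is the crux: I must rule out the critical point lying in $J$. Since $c$ is in a hyperbolic component, $P$ has an attracting periodic orbit, and by the classical theory (Fatou's theorem, e.g.\ \cite{Milnor}) the immediate basin of any attracting cycle must contain a critical point; since $P$ has only the one critical point $\tfrac12$ and only one non-repelling cycle (Remark \ref{R7}, item on quadratic maps having at most one non-repelling orbit), $\tfrac12$ lies in the Fatou set $\mathcal{F}$, hence $\tfrac12\notin J$. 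This contradiction shows $P'$ is bounded away from $0$ on $J$; equivalently, $\mathrm{dist}(J,\tfrac12)>0$.

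To make this quantitative and robust (and to cover the general polynomial case mentioned in the paper), I would phrase it via $G$: on a hyperbolic component the B\"otcher map $\varphi$ extends continuously to $\overline{\mathbb{D}}$ (Remark \ref{R7}, last item), so $J=\varphi(\partial\mathbb{D})$ is a compact set and $\tau\mapsto |P'(\tau)|$ is continuous on it; it therefore attains its infimum $\delta_0=\min_{\tau\in J}|P'(\tau)|$. If $\delta_0=0$ the minimizing $\tau$ is a critical point of $P$ lying in $J$, which we just excluded. Hence $\delta_0>0$ and we may take $\delta=\delta_0$ (or any smaller positive number).

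The main obstacle is the input that the critical point is not in $J$ in a hyperbolic component. The paper itself flags this ("The critical point is outside $J$ (this is easy to show; see also the proof of Proposition \ref{Bb})"), so presumably the authors want a short self-contained argument rather than a citation. The clean route is: in a hyperbolic component the attracting cycle attracts $\tfrac12$ under iteration (its orbit converges to the cycle), so $\tfrac12\in\mathcal{F}_0$-type component, which is disjoint from $J$; indeed $J=\partial\mathcal{F}_\infty$ (Remark \ref{R7}(ii)) and the forward orbit of $\tfrac12$ stays bounded and bounded away from $J$ by normality of the iterates on the Fatou component containing $\tfrac12$. I would also remark that continuity of $P'\circ\varphi$ on $\partial\mathbb{D}$, already used in the paragraph preceding the Corollary on $b_E$, gives the same conclusion and ties this Proposition to the well-definedness of $b_E$. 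For the polynomial generalization one replaces "the critical point" by "the finite set of critical points"; finitely many of them can lie in basins of attracting cycles, but in a hyperbolic component none lies in $J$, and the same compactness-and-continuity argument yields the uniform lower bound $\delta$.
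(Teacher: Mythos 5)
Your proposal is correct and takes essentially the same route as the paper: both rest on the classical theorem (Beardon/Fatou) that the immediate basin of an attracting cycle contains a critical point, so in a hyperbolic component the critical point lies in the Fatou set and not on $J$, and then compactness of $J=\varphi(\partial\mathbb{D})$ together with continuity of $P'$ gives a uniform positive lower bound. The paper's proof additionally observes, via the functional equation $P'(\varphi(z))\varphi'(z)=2z\varphi'(z^2)$, that the critical point cannot lie in $\varphi(\mathbb{D})$ either, but that extra remark is not needed for the stated bound on $J$.
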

\begin{proof}
  It is known \cite{Beardon} p. 194 that the immediate basin of any
 attracting cycle contains at least one critical point. Therefore, in
the hyperbolic components of $\mathcal{M}$ the critical
point cannot be on $J$.
 Since
\begin{equation}
  \label{eq:estimpship}
  P'(\varphi(z))\varphi'(z)=2z\varphi'(z^2)
\end{equation}
the critical point cannot be inside either,  since
otherwise, solving (\ref{eq:estimpship}) for $\varphi'(z^2)$ in terms
of $\varphi'(z)$, it is clear that $\varphi'$  would vanish on a set with
an accumulation point at $z=0$.
Therefore  $|P'|>0$
on the continuous curve $J$.
\end{proof}

\begin{Theorem}  Assume $c$ is in a hyperbolic component of $\mathcal{M}$.
(i)   The Hausdorff dimension $D_H$ of $J$ satisfies
  \begin{equation}
    \label{eq:dimH}
    D_H\ge \beta_E^{-1}
  \end{equation}

(ii) On a set of full measure, $\varphi$ is H\"older continuous with
exponent at least $\beta_E-\epsilon\ge 1/2-\epsilon$
for any $\epsilon>0$.

\end{Theorem}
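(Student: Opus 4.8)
My plan is to derive both parts from one estimate: control of the Birkhoff sums of $\psi(\theta):=\log|P'(\varphi(e^{2\pi i\theta}))|$ along the doubling map $T\theta=2\theta\bmod1$, obtained by marrying the hyperbolicity of $P$ on $J$ with the normality bound \eqref{eq:nme1}. First I would record the geometric input. For $c$ in a hyperbolic component $|P'|\ge\delta>0$ on $J$ by Proposition~\ref{Bb}, and since $J$ carries no non‑repelling cycle, $P|_J$ is uniformly expanding; hence, for a dyadic interval $I$ of generation $n$, the inverse branch of $P_n$ carrying $J$ onto the arc $J_I:=\varphi(\{e^{2\pi i\theta}:\theta\in I\})$ is univalent on a fixed neighbourhood of $J$, and the Koebe distortion theorem gives $\operatorname{diam}J_I\asymp|P_n'(\varphi(e^{2\pi ix}))|^{-1}$ for any $x\in I$, uniformly in $I,n$. (Theorem~\ref{conj1} supplies the same distortion control explicitly near periodic angles, but I would only need this soft version.) Writing $\log|P_n'(\varphi(e^{2\pi ix}))|=\sum_{j=0}^{n-1}\psi(T^jx)$, with $\psi$ continuous on $[0,1)$ since $\varphi\in C(\overline{\mathbb D})$ for hyperbolic $c$ by Remark~\ref{R7} and $|P'\circ\varphi|\ge\delta$, the $(\epsilon,n,m)$‑normality of $x$ forces $\tfrac1n\sum_{j<n}\psi(T^jx)$ to differ from $\int_0^1\psi$ by at most $C\epsilon$ plus the oscillation of $\psi$ over dyadic blocks of length $m$, which tends to $0$ as $m\to\infty$. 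In the normalization for which $\beta_E$ is the almost–sure limiting average of the local exponents $b$ of \eqref{eq:valb} (so that $\int_0^1\psi=\beta_E\log2$, consistent with \eqref{eq:eqbe1} up to the $1/\log2$ carried by $b$), this yields $\operatorname{diam}J_{I_n(x)}\le|I_n(x)|^{\beta_E-\varepsilon}=2^{-n(\beta_E-\varepsilon)}$ for all large $n$ whenever $x$ is eventually normal.

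For (ii): by \eqref{eq:nme1} and Borel–Cantelli, Lebesgue‑a.e.\ $x$ is $(\epsilon,N,m)$‑normal for all large $N$ and every $m,\epsilon$; on the sets $E_K$ where this holds from generation $K$ on (of measure $\to1$) the estimate above holds with a uniform constant. Given $x,x'\in E_K$ with $2^{-n-1}\le|x-x'|<2^{-n}$, these lie in at most two adjacent generation‑$(n-1)$ dyadic intervals, whose arcs share an endpoint, so $|\varphi(e^{2\pi ix})-\varphi(e^{2\pi ix'})|\le\operatorname{diam}J_{I_{n-1}(x)}+\operatorname{diam}J_{I_{n-1}(x')}\lesssim|x-x'|^{\beta_E-\varepsilon}$; hence $\varphi$ is Hölder of exponent $\beta_E-\varepsilon$ on the full‑measure set $\bigcup_KE_K$. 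The bound $\beta_E\ge1/2$ I would take from Note~\ref{Note15}.

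For (i): I pass to the image measure $\mu=\varphi_*(\text{normalised Lebesgue on }\partial\mathbb D)$, the harmonic measure of $J$ from $\infty$, equivalently the balanced (maximal‑entropy) measure of $P$, so $\mu(J_I)=|I|$ for every arc $J_I$ above. Fix a generic $x$ and small $\rho$, and choose $n$ with $\operatorname{diam}J_{I_n(x)}\asymp\rho$, so $n\asymp\log(1/\rho)/(\beta_E\log2)$. Since $J$ is a conformal repeller it has bounded geometry, so $B(\varphi(e^{2\pi ix}),\rho)\cap J$ is covered by boundedly many generation‑$n$ arcs: those over non‑normal angles carry negligible $\mu$‑mass by \eqref{eq:nme1}, and each of the rest has $\mu$‑mass $2^{-n}\asymp\rho^{1/\beta_E}$. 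Hence $\mu(B(y,\rho))\lesssim\rho^{1/\beta_E-\varepsilon}$ at $\mu$‑a.e.\ $y$, and Billingsley's lemma (equivalently the mass distribution principle) gives $D_H(J)\ge\dim_H\mu\ge\beta_E^{-1}-\varepsilon$ for every $\varepsilon>0$, which is \eqref{eq:dimH}. Alternatively, the volume lemma for conformal repellers gives $\dim_H\mu=h_\mu/\chi_\mu=\log2/(\beta_E\log2)=\beta_E^{-1}$ outright, the link to the ``inverse temperature'' picture being Proposition~\ref{PP9}.

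The step I expect to be hardest is the dyadic‑to‑ball passage in (i): I need that $J$ has bounded geometry at all scales — true for hyperbolic $c$, since $\hat{\mathbb C}\setminus K$ is then a John domain and $J$ a quasicircle inside the main cardioid — so that a ball meets $J$ in boundedly many arcs of comparable diameter, and I need \eqref{eq:nme1} precisely to absorb the atypically sized arcs sitting over the dense, measure‑zero set of non‑normal angles. Reconciling the conformal‑dynamics distortion control with the combinatorial normality estimate is where the real work lies; the one‑sided diameter bound that suffices for (ii) is by comparison soft.
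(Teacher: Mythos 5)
Your route is genuinely different from the paper's, so let me compare.

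For part (ii), the paper works with the derivative identity \eqref{eq:qq}, estimates $|\varphi'|$ on circles $|z|=\rho$ approaching $\partial\mathbb{D}$ using the normality bound, and then integrates to get the H\"older modulus (including the linear interpolation device \eqref{eq:strtl} on excluded intervals). You instead estimate $\operatorname{diam} J_I$ directly via Koebe distortion for the inverse branches of $P_n$ and feed the same normality input into a Birkhoff-sum estimate. Both routes deliver the same exponent from the same probabilistic input; your version is cleaner in that it never differentiates $\varphi$ and works entirely on $\partial\mathbb{D}$, at the cost of invoking the (standard, but not spelled out in the paper) conformal-repeller distortion machinery. This is a valid alternative.

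For part (i), the paper's argument is thermodynamic: it derives $\overline{\Phi}_+(\beta_E)=0$ from the same normality estimate and then applies the Legendre-transform identity of Proposition~\ref{PP9}, itself built on the Ruelle--Bowen equation \eqref{eq:Ruelle1}, to get \eqref{eq:estD}. Your ``outright'' alternative via the volume lemma $\dim_H\mu=h_\mu/\chi_\mu=1/\beta_E$ for the balanced measure $\mu=\varphi_*(\text{Leb})$ is correct and is essentially what the paper itself records as the Ruelle--Manning alternative \eqref{eq:ruelle-manning}; that part is fine, provided you cite a reference for the exact (not merely one-sided) dimension formula for ergodic invariant measures of conformal expanding maps (Ma\~n\'e's theorem, or Pesin's volume lemma), since that is a genuine theorem and not a triviality.

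However, your \emph{primary} argument for (i) — the Billingsley/mass-distribution estimate $\mu(B(y,\rho))\lesssim\rho^{1/\beta_E-\varepsilon}$ — has a real gap in the cylinder-to-ball passage. You assert that $B(y,\rho)\cap J$ is covered by ``boundedly many generation-$n$ arcs'' with $n$ chosen so that the \emph{typical} arc has diameter $\asymp\rho$. But the generation-$n$ arcs sitting over non-normal angles inside $B(y,\rho)$ need not have diameter comparable to $\rho$: arcs over angles whose orbit spends a long time near a strongly repelling cycle can be far smaller than $\rho$, so arbitrarily many generation-$n$ arcs can fit in the ball. A fixed generation does not yield a bounded cover; one must pass to an adaptive Moran cover (pieces $J_{I_{n(y')}}$ with $n(y')$ chosen so each piece has diameter $\asymp\rho$), establish that a ball of radius $\rho$ meets only boundedly many such pieces (this uses conformality and bounded distortion, not merely ``bounded geometry''), and then argue that the varying $\mu$-masses $2^{-n(y')}$ of these pieces still sum to $O(\rho^{1/\beta_E-\varepsilon})$ — the last step is exactly where the mass of pieces over fast-expanding orbits can be \emph{larger} than $\rho^{1/\beta_E}$ and must be controlled. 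Your appeal to ``$J$ a quasicircle inside the main cardioid'' also silently narrows the theorem's hypothesis from ``hyperbolic component of $\mathcal{M}$'' to the main cardioid; outside the main cardioid $J$ is not a Jordan curve, let alone a quasicircle, and the John-domain picture you invoke does not apply. The paper's Ruelle--Bowen/Legendre approach sidesteps all of this by never leaving angle space and never converting cylinders to Euclidean balls, which is precisely why it is the route taken there; your volume-lemma fallback also sidesteps it, but by outsourcing exactly this difficulty to a cited theorem. So: (ii) is fine as an alternative, (i) should lean on the volume lemma (with citation) rather than the as-written mass-distribution argument, unless you are prepared to carry out the Moran-cover analysis in full and to do so without the quasicircle assumption.
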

A direct  and elementary
proof of the theorem is given in \S\ref{S3.1}.
\begin{Note}\label{Note15}
  Since $D_H\le 2$, it follows that $\beta_E\ge 1/2$. (By a fundamental result
of Shishikura \cite{Shishikura}, $D_H=2$ on the boundary
of $\mathcal{M}$.)
\end{Note}
\section{Proofs and further results}

\begin{proof}[Proof of Theorem~\ref{sc}]
Note that
  \begin{equation}
  \label{eq:eqP3}
  A(y)=wy+y^2A_0(y)\ \ \text{where} \ \ A_0(y)=y^{-2}(A(y)-wy)\ \text{is a polynomial.}
\end{equation}
We use an analytic solution of (\ref{eq:eqFnx}) to bring the equation
of $F_0$ to a normal form.
\begin{Lemma}[Normal form coordinates]\label{NFC} There is a unique function $g$ analytic
in a disk $\mathbb{D}_{\epsilon}$, such that $g(0)=0, g'(0)=1$  (thus analytically invertible near zero) and
\begin{equation}
  \label{eq:eqg}
  g(wy)=A(g(y))
\end{equation}
\end{Lemma}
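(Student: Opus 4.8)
The plan is to solve the Schröder-type functional equation $g(wy)=A(g(y))$, with $A(y)=wy+y^2A_0(y)$ and $|w|>1$, by the classical Koenigs linearization argument, recast as a fixed point problem so that the uniqueness and analyticity claims both drop out cleanly. Write $g(y)=y+y^2 u(y)$, where $u$ is to be determined, analytic near $0$; the normalization $g(0)=0$, $g'(0)=1$ is then automatic, and conversely any admissible $g$ has this form. Substituting into \eqref{eq:eqg} and using \eqref{eq:eqP3}, the equation $g(wy)=A(g(y))$ becomes, after dividing by $w y^2$ (legitimate since $w\neq 0$ and we work with formal/analytic power series vanishing to order $\ge 2$),
\begin{equation}
  \label{eq:uplan}
  u(wy)=\frac{1}{w}\,u(y)+\frac{1}{w}\,\big(1+y\,u(y)\big)^2\,A_0\big(y+y^2u(y)\big)=: (\mathcal{T}u)(y).
\end{equation}
I would verify that $\mathcal{T}$ maps the space $X_\epsilon$ of functions analytic on $\mathbb{D}_\epsilon$ and continuous up to the boundary, equipped with the sup norm, into itself for $\epsilon$ small, and that it is a contraction there. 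The contraction factor comes from two sources working together: the explicit $1/w$ with $|w|>1$ in front of the linear term, and the fact that the nonlinear term is Lipschitz in $u$ with a constant that is $O(\epsilon)$ (because $A_0$ is a polynomial, hence locally Lipschitz, and the algebraic prefactors $(1+yu)^2$ and the argument $y+y^2u$ depend on $u$ only through terms carrying a factor $y$). Hence for $\epsilon$ small enough the Lipschitz constant of $\mathcal{T}$ is $\le |w|^{-1}+C\epsilon<1$. Banach's fixed point theorem then yields a unique $u\in X_\epsilon$, and therefore a unique $g$ of the required form, analytic in $\mathbb{D}_\epsilon$.

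It remains to argue that this $g$ is the \emph{only} analytic solution with $g(0)=0$, $g'(0)=1$, not merely the only one of the assumed shape — but that is immediate, since any such solution necessarily has a Taylor expansion $g(y)=y+y^2u(y)$ with $u$ analytic near $0$, so it lies in $X_{\epsilon'}$ for some $\epsilon'>0$, solves \eqref{eq:uplan} there, and by the contraction property (applied on the smaller disk, or by analytic continuation of the unique germ) coincides with the constructed $u$. One should also record that $g$ is analytically invertible near $0$ because $g'(0)=1\ne 0$; this is what licenses the later use of $g^{-1}$ in \eqref{eq:eqo} and in the passage to normal-form coordinates.

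The main obstacle I anticipate is purely the bookkeeping in the contraction estimate: one must choose $\epsilon$ so that simultaneously (a) the argument $y+y^2u(y)$ stays inside the disk where $A_0$ is controlled (trivial, $A_0$ entire), (b) $\|\mathcal{T}u\|_\epsilon\le \epsilon$-type invariance holds — here one may need $\|u\|_\epsilon$ bounded by a fixed constant $R$ rather than by $\epsilon$, so the right functional-analytic setup is the closed ball of radius $R$ in $X_\epsilon$ with $R$ fixed first and $\epsilon$ shrunk afterwards — and (c) the Lipschitz constant is $<1$. None of this is deep, but the order of choosing constants ($R$ then $\epsilon$) matters and is the only place an error could creep in. Everything else — existence, uniqueness, and analyticity — is then a one-line invocation of the Banach fixed point theorem. (The stronger statement in Theorem~\ref{sc} that $g$ is \emph{entire} when $A$ is entire does not belong to this lemma; it is obtained afterward by using the functional equation $g(w^k y)=A^{\circ k}(g(y))$ to propagate analyticity from $\mathbb{D}_\epsilon$ outward, since multiplication by $w^k$ exhausts $\mathbb{H}$.)
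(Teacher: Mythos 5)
Your proposal is correct and follows essentially the same route as the paper: both write $g(y)=y+y^2u(y)$, derive a fixed-point equation for $u$, and solve it by the Banach contraction principle in the sup norm on a small disk $\mathbb{D}_\epsilon$, with the contraction coming from the factor $|w|^{-1}<1$ together with an $O(\epsilon)$ Lipschitz contribution from the nonlinearity. (Two cosmetic remarks: your relation $u(wy)=(\mathcal{T}u)(y)$ becomes a genuine fixed-point equation $u=S(u)$ only after the substitution $y\mapsto y/w$, which is precisely the form $g_0(y)=\alpha g_0(\alpha y)+\alpha^2(\cdots)$ the paper uses; and the coefficient in front of the nonlinear term should be $1/w^2$, not $1/w$, a slip that does not affect the estimate since $|w|>1$.)
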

\begin{proof}
  We write $g(y)=y+y^2 g_0(y)$, $\alpha=1/w$ and get
\begin{equation}
  \label{eq:eq4}
  g_0(y)=\alpha g_0(\alpha  y)+\alpha^2(1+\alpha y g^2_0(\alpha y))^2A_0\left(\alpha y
    +\alpha^2 y^2 g_0(\alpha y)\right)
\end{equation}
A straightforward verification shows that, for small $\epsilon$, (\ref{eq:eq4}) is contractive in the
space of analytic functions in $\mathbb{D}_{\epsilon}$ in the ball $\|g_0\|\le
2|A_0(0)|$, in the sup norm.

Define $H(x)=g^{-1}(F_0(x))$. (The definition is correct for small  $x$ since
$g$ is invertible for small argument, and $F_0$, by assumption is small). Obviously $H$ is analytic for small $x$.
We see that
\begin{equation}
  \label{eq:eqh7}
 H(nx)=g^{-1}(A(F_0(x)))=g^{-1}(A(g(H(x))))=g^{-1}(g(wH(x)))=wH(x)
\end{equation}
by (\ref{eq:eqg}). Taking $h(x)=x^{-\log_n w}H(x)$, the conclusion
follows. Note that for any $r$, if $g$ is analytic in $\mathbb{D}_r,$ then, by (\ref{eq:eqg}) and
the monodromy theorem, $g$ is analytic in $\mathbb{D}_{|w|r}$ as long as $A$ is analytic in $\mathbb{D}_r$; since $r$ is
arbitrary, it follows that $g$ is entire if $A$ is entire. In the same way, since $h(nx)=h(x)$,
$h$ is analytic in $\mathbb{H}$. Note also that $g'$ is never zero, since
otherwise it would be zero on a set with an accumulation point at $0$, as it is seen by an argument similar to the one in the paragraph following (\ref{eq:estimpship}).
 \end{proof}

\subsection{Probability distribution of angles}\label{S3.1}
Consider the periodic points of period $mN$ ($m$ and $N$ conveniently large).
These correspond, through $\varphi^{-1}$, to points of the form $z_t =e^{2\pi i t}$
where $t$ has a  periodic binary  expansion of period $mN$.

Consider the orbit $z_t, z_t^2,...,z_t^{2^{Mn-1}}$ (by definition,
$z_t^{2^{Mn}}=z_t$).  We have, by formula (\ref{eq:valb}), with
$L_t=\varphi(z_t)$,
\begin{equation}
  \label{eq:valb2}
  b(L_t)=N^{-1}m^{-1}\sum_{j=0}^{Nm-1}\log_2[P'(\varphi(z_t^{2^{j}}))]
\end{equation} We analyze the deviations from  uniform  distribution
of subsequences of $m$ consecutive bits in the block of length $Nm$. For
this, it is convenient to rewrite the  block of length $Nm$ in base $Q=2^m$, as now a block of length $N$ of $Q$-digits. Every binary $m$-block corresponds to a digit in $\{0,1,...,Q-1\}$ in base $Q$.
To analyze the deviations, we  rephrase the question as follows. Consider $N$ independent variables, $X_1, ...,X_N$ with values: $1$ with probability $1/Q$ if the digit $i$ equals $q$,
and $0$ otherwise. The expectation  $E(N^{-1} (X_1+\cdots+ X_N))$ is
clearly $1/Q$ and  we have $\mathcal{P}(X_i-E(X_i))\in[-1/Q,1-1/Q]=1$ ($\mathcal{P}$ denotes probability). Then,  with
$S=X_1+\cdots+X_N$ we have, by
Hoeffding's inequality \cite{Hoeffding},
\begin{equation}
  \label{eq:Hoeff}
  \mathcal{P}(|N^{-1}S-1/Q|>\epsilon/Q)\le 2e^{-2N\epsilon^2/Q^2}
\end{equation}
Using the elementary fact that $\mathcal{P}(A\vee B)\le \mathcal{P}(A)+\mathcal{P}(B)$, we see that
the probability of a block of length $N$ having the frequency
of any digit departing $1/Q$ by $\epsilon/Q$ is at most
\begin{equation}
  \label{eq:pepsi}
  \mathcal{P}_{\epsilon}\le  2Qe^{-2N\epsilon^2/Q^2}
\end{equation}
We see that (\ref{eq:valb2}) involves shifts in base $2$ (and not
in base $2^m$).  The probability of a block of length $Nm$ in base $2$ having
the frequency of any $m$-block in all its  $m$ successive binary left-shifts ($x\to 2x \mod 1$) departing by $\epsilon/Q$
from its expected frequency of $1/Q$ is thus
\begin{equation}
  \label{eq:Pepsi}
  \mathcal{P}\le  2Qme^{-2N\epsilon^2/Q^2}
\end{equation}
Therefore, the relative frequency of ``$\epsilon$-normally distributed'' $Nm$-periodic binary
expansions with all $m$-size blocks of its binary shifts
distributed within $\epsilon/Q$ of their expected average number is
\begin{equation}
  \label{eq:Pepsi1}
  \mathcal{P}\ge 1-  2Qme^{-2N\epsilon^2/Q^2}
\end{equation}
Let
\begin{equation}
  \label{eq:be}
  b_{EQ}=Q^{-1}\sum_{j=0}^{Q-1} \log_2[P'(\varphi(e^{2\pi i j/Q}))]
\end{equation}
We take $f_1=\Re \log_2 \left(P'\circ\varphi\right) $ and $f_2=\Im \left(\log_2 P'\circ\varphi\right)$,
and for a real function $f$ we write $f^+$ for its positive part and $f^-$
for its negative part. For any number $t$ which is $\epsilon$-normally distributed,
the sequence $2^j t \mod 1$ will have $Nm(1/Q\pm \epsilon/Q)$ points
in each interval of the form $[j/Q, j+1/Q]$. Therefore, taking
the positive real part of the integrand in (\ref{eq:valb2}), we have
the following bound for its contribution to the sum:
\begin{multline}
  \label{eq:sum}
Q^{-1} (1-\epsilon) \sum_{j=0}^{Q-1} \min_{x\in [j/Q,(j+1)/Q]} f_1^+(e^{2\pi i x})\\ \le Q^{-1}\sum_{j=0}^{Q-1}\rho(j/Q) \min_{x\in [j/Q,(j+1)/Q]} f_1^+(e^{2\pi i x})\le   N^{-1}m^{-1}\sum_{j=0}^{2^{Nm}-1}f_1^+(z^{2^j})
\end{multline}
where $\rho(j/Q)$ is the frequency of $2^j t \mod 1$ belonging
to $[j/Q,(j+1)/Q]$. Corresponding estimates hold with $\le$ replaced by $\ge$ and $\min$ with $\max$.
Since
\begin{equation}
  \label{eq:Riem}
  Q^{-1} (1-\epsilon) \sum_{j=0}^{Q-1} \min_{x\in [j/Q,(j+1)/Q]} f_1^+(e^{2\pi i x})\to
\int_0^1f_1^+(e^{2\pi i x})dx
\end{equation}
as $Q\to\infty$ and $\epsilon\to 0$ (and similarly for $f_1^-$ and $f_2^{\pm}$), for any
$\epsilon_1>0$ we can choose $Q$ large enough and and $\epsilon$ small
enough so that on the set of blocks described above (\ref{eq:Pepsi})
we have
\begin{equation}
  \label{eq:b-be}
  |b_{EQ}-b_E|<\epsilon_1
\end{equation}
Clearly then, we have
\begin{multline}
  \label{eq:eqmun}
  1\ge \left(\mu_{NM}(b_E+\epsilon)-\mu_{NM}(b_E-\epsilon)\right)^{1/{Nm}}\\ \ge \left(1-  2Qme^{-2N\epsilon^2/Q^2}\right)^{1/{Nm}}\to 1\ \ \text{as } N\to \infty
\end{multline}
and {\em a fortiori}
\begin{equation}
  \label{eq:eqmu3}
 \mu_{Nm}^{1/{Nm}}(b_E+\epsilon)\to 1\ \ \text{as } N\to \infty
\end{equation}
Therefore,
\begin{equation}
  \label{eq:eqF2}
  \overline{\Phi}(\beta_E+\epsilon_1)=0
\end{equation}
for all $\epsilon_1>0$ and hence $\overline{\Phi}_+(\beta_E)=0$.

On the other hand,
\begin{equation}
  \label{eq:star1}
 -1= \max\left(-t D_H - \overline{\Phi}_+(t)\right)\ge -\beta_E D_H -\overline{\Phi}_+(\beta_E)=-\beta_E D_H
\end{equation}
and thus
\begin{equation}
  \label{eq:estD}
  D_H\ge 1/\beta_E
\end{equation}
\begin{Note}{\rm
  Another approach to obtain  (\ref{eq:dimH}) is the following, using the general
  Ruelle-Manning formula, cf.  \cite{Ruelle4} p. 344, which can be written in the form
\begin{equation}
  \label{eq:ruelle-manning}
  D_H=\sup_{\mu} \left(e_{\mu}(q)/\int_J \log |P'|d\mu \right)
\end{equation}
where $\mu$ is a $P-$ invariant measure and $e_{\mu}(P)$ is the
entropy of $P$ with respect to $\mu$. An inequality obviously follows
by choosing any particular invariant measure. The measure in using
(\ref{eq:ruelle-manning}) to derive the inequality would be
$d\mu=\varphi^{-1}(dx)$ where $dx$ is the Lebesgue measure on $[0,1]$
and the inequality would follow by estimating $e_{\mu}(P)$. }
\end{Note}
\subsection{H\"older continuity on a large measure set}
\begin{proof}
We obtain, from (\ref{eq:estimpship}),
\begin{equation}
  \label{eq:qq}
   \varphi'(z)=z^{2^M}\frac{2^M}{\prod_{j=0}^{M-1}P'(z^{2^j})}\varphi'(z^{2^M})
\end{equation}
Let $\zeta\in \mathcal{N}^c_{\ge [N_0],m,\epsilon}$. We  let $\rho=1-2^{-M}\epsilon_3$ where $M$ will be chosen large.
By the continuity of $\log P'(\varphi)$, for any $\delta>0$ we can choose an $\epsilon_3$ small enough so that for any large  $M$ we have
\begin{equation}
  \label{eq:eqint3}
 \left| M^{-1}\sum_{j=1}^M\log_2  P'(\varphi(\zeta^{2^j}\rho^{2^j}))- M^{-1}\sum_{j=1}^M\log_2  P'(\varphi(\zeta^{2^j}))\right|<\delta/2
\end{equation}
On the other hand, we can choose $M$ large enough so that, reasoning as for (\ref{eq:sum}), we get
\begin{equation}
  \label{eq:eqint32}
 \left| M^{-1}\sum_{j=1}^M\log_2  P'(\varphi(\zeta^{2^j}))-b_E\right|<\delta/2
\end{equation}
For any $\delta_1>0$ we can choose $\epsilon_3$ small enough so that
in turn $\rho^{2^M}$ is sufficiently close to one so that
\begin{equation}
  \label{eq:eq41}
 \left|\prod_{j=1}^M P'(\varphi(\zeta^{2^j}\rho ^{2^j}))\right|\ge 2^{M\beta_E-M\delta_2}
\end{equation}
where $\delta_2=\delta+\delta_1$.
We write $\zeta\rho=\zeta-dx$ and note that $2^M=\epsilon_3/ dx$. Taking $z=\zeta\rho$, we obtain, combining (\ref{eq:qq}), (\ref{eq:eqint3}) (\ref{eq:eqint32}) and (\ref{eq:eq41}),
\begin{equation}
  \label{eq:estimpship1}
 |\varphi'(\zeta-dx)|\le \left|\frac{dx}{\epsilon_3}\right|^{-1+\beta_E-\delta_2}\max_{|z|=1-\epsilon_3}|\phi'(z)|
\end{equation}
or, for some absolute constant $C$,
\begin{equation}
  \label{eq:phip}
  |\phi'(\zeta-dx)|\le C |dx|^{\beta_E-\delta_2-1}
\end{equation}

Thus, by integration, for any two points $x_{1,2}$ in $\mathcal{N}_{[N_0],m,\epsilon}$, we have, for an absolute constant $C_1$,
\begin{equation}
  \label{eq:eqHoldav}
   |\varphi (x_1)-\varphi (x_2)|\le C_1|x_1-x_2|^{\beta_E-\delta_2}
\end{equation}
For $x$ in an (entire) excluded interval $[x_1,x_2]$ in the construction
of $ \mathcal{N}^c_{[N_0],m,\epsilon}$ we replace the curve $x\mapsto
\varphi(e^{2\pi i x})$ by the straight line
\begin{equation}
  \label{eq:strtl}
 \tilde{\varphi}=x\mapsto  \frac{x_2-x}{x_2-x_1}\varphi(x_1)+\frac{x_1-x}{x_2-x_1}\varphi(x_2)
\end{equation}
and let $\tilde{\varphi}=\varphi$ otherwise. The new curve  $\tilde{\varphi}$
is clearly H\"older continuous of exponent $\beta_E-\delta_2$. Indeed,
we can use the inequality
\begin{equation}
  \label{eq:estm4}
  \frac{1+x^{\lambda}}{(1+x)^{\lambda}}\le 2^{1-\lambda} \text{ for } x \text { and } \lambda \text{ in } (0,1)
\end{equation}
to check that
\begin{equation}
  \label{eq:est5}
  \frac{|x_1-x_2|^{\lambda}+|x_2-x_3|^{\lambda}}{|x_1-x_2+x_2-x_3|^{\lambda}}\le 2^{1-\lambda}\text{ for } x_1<x_2<x_3 \text { and } \lambda \text{ in } (0,1)
\end{equation}
For $x<y$ in an interval  $[a,b]$ where $\tilde{\varphi}$ is a straight line,
with $\tilde{\varphi}(a)=X,\tilde{\varphi}(b)=Y$ and $t, s$ in $(0,1)$
we have
\begin{multline}
  \label{eq:phitilde}
  \frac{\tilde{\varphi}(x)-\tilde{\varphi}(y)}{(x-y)^{\lambda}}=
\frac{(t X+(1-t) Y -(s X+(1-s )Y)}{[t a+(1-t) b -(s a+(1-s) b)]^\lambda}\\=
\frac{(t-s)(X-Y)}{(t-s)^\lambda(b-a)^\lambda}\le C(t-s)^{1-\lambda}\le C
\end{multline}
H\"older continuity follows from (\ref{eq:eqHoldav}), (\ref{eq:phitilde}) and
the ``triangle-type'' inequality (\ref{eq:est5}).

The statement about the Hausdorff dimension
follows from the H\"older exponent,  see \cite{Urbansky} p. 156 and p. 168, implying that
the Hausdorff
dimension of the graph of $\tilde{\varphi}$ is less than
$2-\beta_E+\epsilon$.
\end{proof}
\end{proof}

  \subsection{Calculation of the transseries at rational angles. Proof of Theorem~\ref{conj1}}\label{PT1}
\begin{Note}
  {\rm By (\ref{eq:eqH}), we  have
    \begin{equation}
      \label{eq:tt}
      \varphi\left(z e^{2\pi i t}\right)=L_t+\mu\left(z e^{2\pi i t}\right)
    \end{equation}
where
\begin{equation}
  \label{eq:mu}
  \mu\left(z e^{2\pi i t}\right)\to 0 {\text{ as  }} z \to 1  \ \ \text{nontangentially}
\end{equation}
 }\end{Note}
\begin{Note}{\rm We can of course restrict the analysis to $t\in [0,1)$,
and from now on we shall assume this is the case.

}\end{Note}
\bigskip

 From this point on we shall assume that $t\in [0,1)$ has a periodic binary expansion.

 \begin{Note}\label{defM}
   We let $N_t$ be the smallest $N> 0$ with the property that $2^N t=t \mod 1$.
Let $P:=P_N$.  $P_N$ is a polynomial of degree $M=2^N$.
 \end{Note}

\begin{Note}\label{per}{\rm
 By (\ref{eq:eqH}) we have
  \begin{equation}
    \label{eq:lim}
   P( L_t)=L_t
  \end{equation}
and $L_t$ is a periodic point of $f$ (this, in fact, is instrumental
in the delicate analysis of \cite{DouadyHubbard}). Also, we have
\begin{equation}
  \label{eq:eqPP}
  P_N(\varphi(z e^{2\pi i t}))=\varphi(z^{2^N}e^{2\pi i t})
\end{equation}
}\end{Note}
\begin{Note}{\rm
  Since the Julia set is the closure of unstable periodic points, by Note~\ref{per} we must have
  \begin{equation}
    \label{eq:evalder}
P'(L_t):=w=1/\alpha \Rightarrow |w|\ge 1
  \end{equation}
\begin{Proposition}[See \cite{McMullen}, p.61]\label{KM}
  For the quadratic map, if $f$ has an indifferent cycle, then $c$ lies in the
  boundary of the Mandelbrot set.
\end{Proposition}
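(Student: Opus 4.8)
The statement to prove is Proposition~\ref{KM}: for the quadratic map, if $f$ has an indifferent cycle then $c$ lies in the boundary of the Mandelbrot set.

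\medskip

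\emph{Plan of proof.} The plan is to deduce this from two facts already available to us: that the Mandelbrot set $\mathcal{M}$ is closed with interior equal to the union of hyperbolic components together with parameters lying in components carrying an indifferent cycle, and that in the interior of each hyperbolic component the multiplier of the unique attracting cycle depends analytically (indeed, as a proper holomorphic map onto $\mathbb{D}$) on $c$. First I would recall, from Remark~\ref{R7}, that $\mathcal{M}$ coincides with the set of $c$ for which $J$ is connected, and that a quadratic map has at most one non-repelling periodic orbit (Milnor). So if $f$ has an indifferent cycle, that cycle is \emph{the} unique non-repelling cycle, and in particular $f$ has no attracting cycle; hence $c$ is not in any hyperbolic component.

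\medskip

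The core of the argument is then to show that $c$ cannot be an interior point of $\mathcal{M}$ at all. Suppose for contradiction $c_0 \in \mathrm{int}\,\mathcal{M}$ and $f_{c_0}$ has an indifferent cycle of period $k$ and multiplier $\rho_0$ with $|\rho_0| = 1$. On a neighbourhood $U$ of $c_0$ inside $\mathcal{M}$, the implicit function theorem applied to the equations $P_{c}^{\circ k}(z) = z$, $(P_c^{\circ k})'(z)\neq 1$ (the non-parabolic case) lets one follow the cycle analytically, producing a holomorphic multiplier map $\rho: U \to \mathbb{C}$ with $\rho(c_0) = \rho_0$. For $c \in U \subset \mathcal{M}$ this cycle stays non-repelling in a neighbourhood (or one invokes: a cycle cannot suddenly become the unstable kind while the parameter stays in $\mathcal{M}$, using that $\mathcal{M}$ has no attracting-cycle behaviour outside hyperbolic components and the uniqueness of the non-repelling orbit), so $|\rho(c)| \le 1$ on $U$; by the maximum modulus principle $\rho$ is constant $\equiv \rho_0$ on $U$. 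But a constant multiplier on an open set of parameters is impossible for the quadratic family — e.g. because the set of $c$ with a cycle of exact period $k$ and fixed multiplier is the zero set of a non-trivial polynomial in $c$, or because one can produce nearby parameters (such as $c$ making the same cycle superattracting, which exist arbitrarily close by the theory of hyperbolic components bordering the parabolic parameter) where the multiplier differs. The parabolic case $\rho_0$ a root of unity is handled similarly after passing to an appropriate iterate or perturbing to split the parabolic cycle; alternatively one cites that parabolic parameters are known to lie on $\partial\mathcal{M}$ as the landing points of rational external rays.

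\medskip

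Putting it together: $c$ is in $\mathcal{M}$ (since $f$ has a bounded critical orbit — the indifferent cycle attracts the critical point, or at least the critical orbit is non-escaping), but $c$ is not in the interior of $\mathcal{M}$ by the rigidity argument above, hence $c \in \partial\mathcal{M}$. The main obstacle I anticipate is the analytic continuation of the indifferent cycle and the justification that $|\rho|\le 1$ persists on a full neighbourhood of $c_0$ \emph{within} $\mathcal{M}$ — this uses the classification of $\mathrm{int}\,\mathcal{M}$ and the uniqueness of non-repelling cycles in a slightly subtle interplay, and the parabolic case needs its own short treatment. Since the paper cites this as ``See \cite{McMullen}, p.61'', in the write-up I would in fact keep this to a brief paragraph invoking McMullen's argument, spelling out only the reduction steps above that connect it to the facts already collected in Remark~\ref{R7} and Proposition~\ref{Bb}.
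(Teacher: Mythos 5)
The paper does not prove this statement — it is quoted with the citation ``See \cite{McMullen}, p.61'' and used as a black box. So there is no paper proof to compare against; I can only assess the soundness of your sketch.

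Your reduction steps (uniqueness of the non-repelling orbit, passing to the implicit-function-theorem continuation of the cycle and its multiplier $\rho$) are fine, but the heart of the argument as you set it up does not work. The claim that, for $c$ in a neighbourhood $U\subset\mathcal{M}$ of $c_0$, the followed cycle ``stays non-repelling'' — so that $|\rho|\le 1$ on $U$ and the maximum modulus principle forces $\rho\equiv\rho_0$ — is false. The cycle does \emph{not} stay non-repelling as $c$ ranges over $\mathcal{M}$. A concrete instance: at $c_0=-3/4$ (the root of the period-$2$ bulb) the fixed point has multiplier $-1$; for $c$ slightly inside the period-$2$ bulb the parameter is still in $\mathcal{M}$ but that fixed point has multiplier of modulus $>1$. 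The parenthetical ``a cycle cannot suddenly become the unstable kind while the parameter stays in $\mathcal{M}$'' is exactly the mistaken belief, and it is also close to circular, since ``no attracting-cycle behaviour outside hyperbolic components'' is the definition of hyperbolic component, not an independent fact you can invoke here.

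What the cited argument actually exploits is the opposite phenomenon: once one knows $\rho$ is a non-constant holomorphic function of $c$ (true in the quadratic family: for period one, $\rho(c)=1\pm\sqrt{1-4c}$, and for higher periods the multiplier is a non-constant algebraic function of $c$, e.g.\ via the multiplier isomorphism on any hyperbolic component), openness of $\rho$ shows that arbitrarily close to $c_0$ there are parameters with $|\rho|<1$ \emph{and} parameters with $|\rho|>1$. Since a quadratic has at most one non-repelling cycle, the number of attracting cycles is $1$ at the former parameters and $0$ at the latter and at $c_0$ itself, so it is not locally constant at $c_0$. By the Ma\~n\'e–Sad–Sullivan/Lyubich characterization, $c_0$ is therefore not a $J$-stable parameter; and because both $\mathrm{int}\,\mathcal{M}$ (on any component, whether hyperbolic or ``queer'', the attracting-cycle count is locally constant) and $\mathbb{C}\setminus\mathcal{M}$ consist of $J$-stable parameters, the bifurcation locus is contained in $\partial\mathcal{M}$, whence $c_0\in\partial\mathcal{M}$. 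This covers the parabolic case as well, with only minor adjustments to the analytic continuation of the cycle. In short: your rigidity step should go through openness of $\rho$ plus the $J$-stability dichotomy, not through maximum modulus on a region where $|\rho|\le 1$ simply does not hold.
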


By Proposition~\ref{KM},  in our assumption on $c$ and since hyperbolic components belong to the interior of $\mathcal{M}$, we must have
\begin{equation}
  \label{eq:alphaval}
  |w|> 1
\end{equation}
}
\end{Note}

\begin{proof}[Proof of Theorem~\ref{conj1}] (i)
    Let $F_0(x)=\varphi(e^{2\pi i t-x})-L_t$ and $A(y)=P_N(y+L_t)-L_t$. The
    statement
now follows from Theorem~\ref{sc}  with $h(s)=\omega(e^s)$.

Note that $h$ cannot be constant,
or else $e^{2\pi i t}$ would be a point near which analytic continuation
past $\mathbb{D}$ would exist, contradicting Theorem \ref{conj1}, (ii).

(ii) Note that if $\varphi$ is analytic at some
binary rational, then it is analytic at one, since
\begin{equation}
  \label{eq:eqJ}
 \varphi(z^{2^J})= P_J(\varphi(z))
\end{equation}
On the other hand, $\varphi(1)=P(\varphi(1))$ and thus either
$\varphi(1)=0$ (possible if $|\lambda|>1$) or $\varphi(1)=\lambda^{-1}(\lambda-1)$
(possible if $|2-\lambda|>1$).
For $\varphi$ to be analytic at one, we must have $b_1\in\NN$,
or $P'=2^k$, $k\in\NN$. This means $\lambda=2^n,n\in\NN$ or $\lambda=2-2^n,n\in\NN$
and, to have  $c\in \text{int}\mathcal{M}$ we see that the only possibilities
are $\lambda\in \{0,2\}$.
  \end{proof}

\subsection{Proof of Proposition~\ref{PP9}}
\begin{proof} We only prove the result for $\overline{\Phi}_+$, since
the proof for $\underline{\Phi}_+$ is very similar.

Note first that $\max \{\beta\in B_n:n\in\NN\}<\beta_M<\infty$. (Indeed,
since the Julia set is compact, we have  $\|P'\|_{\infty,J}<K<\infty$, and thus $|P'_N|<K^N$ for some $K$.)

  We start from  Ruelle-Bowen's implicit relation for the Hausdorff dimension $D_H$ \cite{Ruelle},
\begin{equation}
  \label{eq:Ruelle1}
 \lim_{n\to\infty} A_n(D_H)= \lim_{n\to\infty}\sum_{z\in \text{fix}(P_n)}|P'_n(z)|^{-{D_H}}=1
\end{equation}
With $B_n=\{\beta(z): z\in \text{fix}(P_n)\}$, $\alpha=2^{D_H}$ we then have (see (\ref{eq:valb})
and Note \ref{N17})
\begin{equation}
   \label{eq:Ruelle2}
 \lim_{n\to\infty}\sum_{\beta \in B_n }\alpha^{-n \beta }N_n(\beta)=
 \lim_{n\to\infty}\sum_{\beta \in B_n }2^n\alpha^{-n \beta}\rho_n(\beta)=1
\end{equation}
where $N_n(\beta)$ is the degeneracy of the value $\beta$ and
$\rho_n(\beta)$ is the (counting) probability of the value $\beta$
within $B_n$.  Denote as usual by $\delta$ the Dirac mass at zero. We get,
for any $\epsilon>0$ (integrating by parts  and noting that
$\mu_n(s)\alpha^{-s}=0$ at $-\epsilon$ and at infinity),
\begin{multline}
  \label{eq:dangl}
2^{-n}A_n(D_H)=
 \int_{-\epsilon}^{\beta_M}d\beta\alpha^{-n\beta }\sum_{\beta'\in B_n}\rho(\beta')\delta(\beta-\beta')\\D_H\ln 2\int_{-\epsilon}^{\infty} \mu_n(s)\alpha^{-ns}ds
=:nD_H\ln 2\int_{0}^{\infty} F^n_n(s)\alpha^{-ns}ds
\end{multline}

We first estimate away  the integral from $\beta_M$ to infinity. Since $\overline{F}(t)=1$ for $t>\beta_M$, we have
\begin{equation}
  \label{eq:eqest4}
  \int_{\beta_M}^{\infty}\mu_{n_k}(s)\alpha^{-s}ds= \int_{\beta_M}^{\infty}\alpha^{-s}ds=o(e^{-\alpha \beta_M})
\end{equation}
Since $\beta_M$ can be chosen arbitrarily large, this part of the integral
does not contribute to the final result. Therefore we only need to show that
$$\lim_{n\to\infty}\left(\int_{0}^{\beta_M} F^n_n(s)\alpha^{-ns}ds\right)^{1/n}=\max_{s\in [0,\beta_M]} \overline{F}(s)\alpha^{-s}$$
since according to (\ref{eq:dangl})
$$\lim_{n\to\infty}\log_2 \left(\int_{0}^{\infty} F^n_n(s)\alpha^{-ns}ds\right)^{1/n}=-1$$

\begin{Proposition}
  Let $f:[a,b]\to [0,1]$ ($0\le a<b<\infty$) be increasing. Assume
  further that $f\equiv 1$ on $(b',b)$ where $b'<b$. Then, if $\alpha>1$, we have  $$\sup
  f_+(s)\alpha^{-s}=\max f_+(s)\alpha^{-s}=f_+(m)\alpha^{-m}$$ for some, possibly
  non-unique, $m\in[0,b']$.

\end{Proposition}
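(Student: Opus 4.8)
The plan is to reduce the statement to the elementary fact that a nonnegative upper semicontinuous function on a compact interval attains its supremum. Set $g(s)=f_+(s)\alpha^{-s}$. First I would record the regularity of $f_+$: since $f$ is increasing, $f_+(s)=f(s+0)$ is itself increasing and right-continuous, and at any $s_0$ its left limit satisfies $f_+(s_0-0)=\lim_{s\uparrow s_0}f(s+0)\le f(s_0)\le f_+(s_0)$. Hence $\limsup_{s\to s_0}f_+(s)\le f_+(s_0)$ at every $s_0$, i.e.\ $f_+$ is upper semicontinuous on $[a,b]$. Since $s\mapsto\alpha^{-s}$ is continuous, strictly positive and bounded on $[a,b]$, and $f_+\ge 0$, the product $g$ is nonnegative and upper semicontinuous there.

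Next I would split the interval at $b'$. The hypothesis $f\equiv 1$ on $(b',b)$ forces $f(b)=1$ (as $f$ is increasing with values in $[0,1]$) and $f_+\equiv 1$ on $[b',b]$, so $g(s)=\alpha^{-s}$ is strictly decreasing on $[b',b]$ and $\max_{[b',b]}g=g(b')=\alpha^{-b'}$. On the compact interval $[a,b']$ the upper semicontinuous function $g$ attains its maximum at some $m\in[a,b']$ (take a maximizing sequence, pass to a convergent subsequence, and apply $\limsup$ semicontinuity at the limit point). Since $b'\in[a,b']$ we get $g(m)\ge g(b')=\alpha^{-b'}=\max_{[b',b]}g$, so $m$ is a global maximizer of $g$ on all of $[a,b]$. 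Therefore $\sup_s f_+(s)\alpha^{-s}=\max_s f_+(s)\alpha^{-s}=f_+(m)\alpha^{-m}$ with $m\in[a,b']$ — and with $m\in[0,b']$ in the case $a=0$ used in \eqref{eq:dangl}.

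The only point requiring explicit care — and essentially the only substantive remark in the argument — is the behaviour of $g$ at jump points of $f$: there $g$ is genuinely discontinuous, but the jump of $f_+$ is upward and the maximum "prefers" the upper value, which is exactly what upper semicontinuity encodes. This is also why the maximizer need not be unique (it may sit at a jump point of $f_+$ or anywhere on a flat stretch of $g$) and why the claim is phrased in terms of $f_+$ rather than $f$. No genuine obstacle arises; the content is just the extreme value theorem for upper semicontinuous functions, together with the trivial monotonicity of $g$ on the terminal segment $[b',b]$.
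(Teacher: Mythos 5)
Your proof is correct and complete. The paper itself gives no argument here, remarking only that ``the proof is elementary and straightforward,'' so there is nothing to compare against; the route you chose --- observing that the increasing right-continuous function $f_+$ is upper semicontinuous, hence $g(s)=f_+(s)\alpha^{-s}$ is a nonnegative upper semicontinuous function on a compact interval and attains its supremum, then splitting off the terminal segment $[b',b]$ where $g(s)=\alpha^{-s}$ is strictly decreasing so the maximizer can be taken in $[a,b']$ --- is exactly the elementary argument being alluded to, and your remark about upward jumps is the right thing to say to justify why $f_+$ (rather than $f$) appears in the statement. One cosmetic point: you need not hedge about ``the case $a=0$''; since $a\ge 0$ by hypothesis, $[a,b']\subseteq[0,b']$ always, so the stated conclusion $m\in[0,b']$ holds regardless of whether $a=0$.
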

\begin{proof}
  The proof is elementary and  straightforward.
\end{proof}

Consider a countable dense set $S$ and
for each $s\in S$ take a subsequence $\{F_{n;s}\}$ so that $F_{n;s}\to \overline{F}(s)$
as $n\to\infty$. By a diagonal argument we find a subsequence
$\{F_{n_k}\}$ converging to $\overline{F}$  on $S$.  By abuse of notation,
we call this sequence $F_n$.

By standard results
on sequences of monotone functions,  \cite{Doob} p. 165, $\{F_{n}\}_{n\in\NN}$
converges to $\overline{F}$ at all points of continuity of $\overline{F}$, that is
on $[0,\beta_M]$ except for a countable set, and
the convergence is uniform on any interval of continuity
of $F$.
\begin{Proposition}\label{PP11}
  Assume that $f:[a,\infty)\to[0,1]$ is increasing and right continuous ($f=f_+)$. Let $m$ be a point of maximum of
$f(x)\alpha^{-x}$. Then,

(i) For all $x>0$
we have
\begin{equation}
  \label{eq:eqest5}
  |f(m+x)\alpha^{-m-x}-f(m)\alpha^{-m}|\le f(m)\alpha^{-m}(1-\alpha^{-x})\le x \ln\alpha
\end{equation}

(In particular  $f$ is H\"older right-continuous  at $m$,  with exponent one.)

(ii) We have  $\sup f=\max f=\text{\rm essup}f$.
\end{Proposition}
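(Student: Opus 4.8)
The plan is to prove Proposition~\ref{PP11} in two parts, both exploiting only the defining property of the maximum point $m$: that $f(m)\alpha^{-m}\ge f(x)\alpha^{-x}$ for all $x$ in the domain, together with monotonicity and right-continuity of $f$.

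For part (i), fix $x>0$. On one hand, since $m$ is a maximizer, $f(m+x)\alpha^{-m-x}\le f(m)\alpha^{-m}$, so the quantity $f(m+x)\alpha^{-m-x}-f(m)\alpha^{-m}$ is nonpositive. On the other hand, since $f$ is increasing, $f(m+x)\ge f(m)$, hence $f(m+x)\alpha^{-m-x}\ge f(m)\alpha^{-m-x}=f(m)\alpha^{-m}\alpha^{-x}$. Subtracting $f(m)\alpha^{-m}$ gives
\[
f(m+x)\alpha^{-m-x}-f(m)\alpha^{-m}\ge -f(m)\alpha^{-m}(1-\alpha^{-x}).
\]
Combining the two bounds yields $|f(m+x)\alpha^{-m-x}-f(m)\alpha^{-m}|\le f(m)\alpha^{-m}(1-\alpha^{-x})$. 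The second inequality in \eqref{eq:eqest5} follows since $f(m)\alpha^{-m}\le f(m)\le 1$ (as $f$ takes values in $[0,1]$ and $\alpha>1$, so $\alpha^{-m}\le 1$ when $m\ge 0$, which holds because $a\ge 0$ is implicitly assumed — if not, one replaces the crude bound by $f(m)\alpha^{-m}\le\alpha^{-a}$, still a constant), together with the elementary estimate $1-\alpha^{-x}=1-e^{-x\ln\alpha}\le x\ln\alpha$. The H\"older right-continuity with exponent one at $m$ is then immediate: for $x>0$ small, $|f(m+x)-f(m)|\le\alpha^{m}|f(m+x)\alpha^{-m-x}\alpha^{x}-f(m)\alpha^{-m}\alpha^{x}|$ and one uses \eqref{eq:eqest5} plus the boundedness of $\alpha^{x}$ near $x=0$ to absorb constants; more simply, from $f(m)\le f(m+x)\le f(m)\alpha^{x}$ one gets directly $0\le f(m+x)-f(m)\le f(m)(\alpha^{x}-1)\le C x$.

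For part (ii), the claim is that $\sup f=\max f=\operatorname{essup}f$. Since $f$ is increasing on $[a,\infty)$ and bounded by $1$, $\sup f=\lim_{x\to\infty}f(x)$ exists. The point is that this supremum is attained. This is where the hypothesis hidden in the statement of the ambient Proposition is used: in the application $f$ is one of $\overline F,\underline F$, which equal $1$ on a half-line $(\beta_M,\infty)$ (or, in the stated generality, $f\equiv1$ on $(b',\infty)$ — though here the domain is $[a,\infty)$, so read ``$f\equiv 1$ eventually''). Then $\sup f=1$ and $f(x)=1$ for all $x>b'$, so the sup is a max, attained on all of $(b',\infty)$ (and possibly earlier). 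For the essential supremum: $\operatorname{essup}f\le\sup f=\max f$ trivially, and since $f=1$ on the set $(b',\infty)$ which has positive (indeed infinite) Lebesgue measure, $\operatorname{essup}f\ge 1=\max f$. Hence all three coincide.

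The main subtlety — and the only place where care is needed — is the interface between the two Propositions: the ``inner'' Proposition is stated for $f:[a,b]\to[0,1]$ with $f\equiv1$ near the right endpoint, while Proposition~\ref{PP11} is stated for $f:[a,\infty)\to[0,1]$, and part (ii) silently needs the ``eventually $1$'' hypothesis to make $\sup=\max$ true (a generic bounded increasing function on $[a,\infty)$ need not attain its sup). I expect the intended reading is that $f$ here is $\overline F_+$ or $\underline F_+$, for which Note~\ref{Note15}/the discussion around $\beta_M$ guarantees $f\equiv 1$ on $(\beta_M,\infty)$; under that reading part (ii) is the short argument above. If instead one wants part (ii) with no extra hypothesis, it is false, so I would insert the phrase ``assuming moreover $f\equiv1$ on some $(b',\infty)$'' to match the hypothesis of the preceding Proposition. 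Part (i) needs no such assumption and is the purely computational core; it is routine and presents no obstacle.
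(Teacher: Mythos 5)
Your part (i) is correct and is essentially the paper's own argument: monotonicity gives $f(m+x)\alpha^{-m-x}\ge f(m)\alpha^{-m-x}$, maximality of $m$ gives the upper bound $f(m)\alpha^{-m}$, and the chain $f(m)\alpha^{-m-x}\le f(m+x)\alpha^{-m-x}\le f(m)\alpha^{-m}$ yields \eqref{eq:eqest5} together with $1-\alpha^{-x}\le x\ln\alpha$. Nothing to add there.

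Part (ii) is where you go astray. You read it as a statement about $f$ itself and conclude it needs an ``$f\equiv 1$ eventually'' hypothesis; but the quantity the proposition is actually about --- and the only one used afterwards --- is $g(x)=f(x)\alpha^{-x}$. In the proof of Proposition \ref{PP9} the claim is invoked to identify $\max_x\bigl(F_n(x)\alpha^{-x}\bigr)$ with $\operatorname{essup}_x\bigl(F_n(x)\alpha^{-x}\bigr)$, so that Lemma \ref{Lessup} (which only controls essential suprema under a.e.\ convergence) upgrades to convergence of the maxima appearing in \eqref{eq:est4}. Under that reading, $\sup g=\max g$ is immediate because a maximum point $m$ exists by hypothesis, and the only nontrivial assertion is $\max g=\operatorname{essup} g$; this is exactly what the paper means by ``a straightforward consequence of (i)'': by \eqref{eq:eqest5}, for $0<x<\delta$ one has $f(m+x)\alpha^{-m-x}\ge f(m)\alpha^{-m}(1-x\ln\alpha)>\max g-\epsilon$, so $g$ exceeds $\max g-\epsilon$ on an interval of positive length, whence $\operatorname{essup} g\ge\max g$, the reverse inequality being trivial. (The same estimate, being uniform in the function once $\alpha$ is fixed, supplies the uniform measure lower bound $c(\epsilon)$ demanded by Lemma \ref{Lessup}.) Your substitute --- $\sup f=\max f=\operatorname{essup} f=1$ under an ``eventually $1$'' hypothesis --- neither follows from (i) nor delivers the fact needed downstream, since $\max_x F_n(x)\alpha^{-x}$ is attained in the interior and is in general strictly less than $1$. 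You are right that the wording ``$\sup f=\max f=\operatorname{essup} f$'' is sloppy and, read literally, false without extra hypotheses; but the repair consistent with the paper's proof and its use is to read it for $f(x)\alpha^{-x}$ and derive it from (i) as above, not to import the ``$f\equiv1$ near the right end'' hypothesis of the preceding proposition.
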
\label{P11}

\begin{proof}
(i)  Using monotonicity and the definition of $m$ we have, for all $x>0$,
\begin{equation}
  \label{eq:eqfn}
  f(m)\alpha^{-m-x}\le f(m+x)\alpha^{-m-x}\le f(m)\alpha^{-m}  \end{equation}
which implies (\ref{eq:eqest5}).

(ii) This is a straightforward consequence of (i).
\end{proof}
\z Using Proposition \ref{PP11} (i), with the notations there, we see that
\begin{equation}
  \label{eq:est4}
\epsilon^{\frac{1}{n}}(1-2\epsilon\ln\alpha)    \max(F_n(x)\alpha^{-x})
             \le\left(\int_{0}^{{\beta_M}} dt F_n^n\alpha^{-nt}\right)^{\frac{1}{n}} \le \beta_M^{\frac{1}{n}}\max(F_n (x)\alpha^{-x})
\end{equation}
for all $\epsilon>0$. Thus we only need to show $\max(F_n (x)\alpha^{-x})- \max(F (x)\alpha^{-x})\to 0$ as $n\to\infty$.
Proposition \ref{PP9}  follows using (\ref{eq:est4}), Proposition \ref{PP11} (ii) and the following lemma.
\begin{Lemma}\label{Lessup}
  Assume $\sup_{[a,b]}\|f_n\|_{\infty}\le 1$ and $f_n\to f$ pointwise a.e. on $[a,b]$. Assume further that meas$\{x:f_n(x)>  \text{\rm essup}_{[a,b]} f_n-\epsilon\}>c(\epsilon)>0$ (uniformly in $n$) for all $\epsilon>0$. Then
  \begin{equation}\label{essup}
    \text{\rm essup}_{[a,b]} f_n \to  \text{\rm essup}_{[a,b]} f
  \end{equation}
\end{Lemma}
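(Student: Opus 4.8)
\textbf{Plan of proof of Lemma \ref{Lessup}.}
The strategy is to prove the two inequalities $\limsup_n \mathrm{essup}\, f_n \le \mathrm{essup}\, f$ and $\liminf_n \mathrm{essup}\, f_n \ge \mathrm{essup}\, f$ separately, using in both directions the hypothesis that the superlevel sets of $f_n$ near their essential sup have measure bounded below, together with Egorov/Fatou-type arguments to transfer this information to $f$ and back. Write $S_n = \mathrm{essup}_{[a,b]} f_n$ and $S = \mathrm{essup}_{[a,b]} f$; since $\|f_n\|_\infty \le 1$ the sequence $S_n$ is bounded, so it suffices to show that every convergent subsequence has limit $S$, and by passing to such a subsequence we may assume $S_n \to L$ and must show $L = S$.

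First I would prove $L \le S$. Fix $\epsilon>0$. For all large $n$ we have $S_n \ge L-\epsilon$, so by hypothesis $\mathrm{meas}\{x: f_n(x) > L - 2\epsilon\} \ge \mathrm{meas}\{x: f_n(x) > S_n - \epsilon\} > c(\epsilon) > 0$ for those $n$. Let $E = \{x: f(x) \ge L - 3\epsilon\}$; I claim $\mathrm{meas}(E) \ge c(\epsilon)$, which forces $S \ge L - 3\epsilon$ and, $\epsilon$ being arbitrary, $S \ge L$. To see the claim, suppose $\mathrm{meas}(E) < c(\epsilon)$. On the complement $[a,b]\setminus E$ we have $f < L - 3\epsilon$ a.e., and since $f_n \to f$ a.e., for a.e. $x \in [a,b]\setminus E$ there is $n(x)$ with $f_n(x) < L - 2\epsilon$ for $n \ge n(x)$. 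By Egorov's theorem there is a set $G \subset [a,b]\setminus E$ with $\mathrm{meas}(([a,b]\setminus E)\setminus G)$ as small as we like on which this convergence is uniform, so for $n$ large $f_n < L - 2\epsilon$ on $G$; choosing $G$ so that $\mathrm{meas}([a,b]\setminus G) < c(\epsilon)$ (possible since $\mathrm{meas}(E) + \mathrm{meas}(([a,b]\setminus E)\setminus G) < c(\epsilon)$), we contradict $\mathrm{meas}\{x : f_n(x) > L - 2\epsilon\} \ge c(\epsilon)$ for $n$ large.

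For the reverse inequality $L \ge S$, fix $\epsilon > 0$ and let $B = \{x : f(x) > S - \epsilon\}$, which has positive measure by definition of $S$. Since $f_n \to f$ a.e. on $B$, again by Egorov there is $B' \subset B$ of positive measure on which $f_n \to f$ uniformly, hence $f_n > S - 2\epsilon$ on $B'$ for all large $n$; this gives $S_n \ge S - 2\epsilon$ for large $n$, so $L \ge S - 2\epsilon$, and letting $\epsilon \to 0$ gives $L \ge S$. Combining the two inequalities yields $L = S$, and since the subsequence was arbitrary, $S_n \to S$, which is \eqref{essup}.

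\textbf{Main obstacle.} The delicate point is the first inequality: one cannot conclude anything about $f$ from the bare pointwise limit without controlling the measures, and the hypothesis only gives a lower bound $c(\epsilon)$ on the superlevel-set measures of $f_n$, not on their location. The Egorov argument is exactly what converts the uniform-in-$n$ measure lower bound into the needed statement about $f$; the bookkeeping of the three $\epsilon$-scales ($L-\epsilon$, $L-2\epsilon$, $L-3\epsilon$) and ensuring the exceptional sets from Egorov and from $E$ together stay below $c(\epsilon)$ is the only real content, and it is routine once set up carefully. A minor point worth a remark is that right-continuity of the $F_n$ (as used in the application) is not actually needed for this lemma — the statement holds for arbitrary measurable $f_n$ with the stated measure hypothesis.
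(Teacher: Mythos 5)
Your argument is correct and is exactly the Egorov-based proof the paper has in mind; the paper itself supplies no details, saying only that the lemma ``follows easily, for instance, from the definition of essup and Egorov's theorem.'' Both directions check out: the three-scale $\epsilon$-bookkeeping in the first inequality correctly converts the uniform-in-$n$ lower bound $c(\epsilon)$ on superlevel-set measures into a positive-measure superlevel set for $f$ via Egorov, and the reverse inequality is the standard easy direction that needs no use of $c(\epsilon)$ at all, as you note.
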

\begin{proof}
  This is standard measure theory; it  follows easily, for
  instance, from the definition of essup and Egorov's theorem.
\end{proof}
\end{proof}

\subsection{Proof of B\"ottcher's theorem}\label{S5}
({\bf Note:} this argument extends to general analytic maps.)

 We
write $\psi=\lambda z+\lambda^2zg(z)$ and obtain
\begin{equation}
  \label{eq:H}
  g(z)-\frac{1}{2}g(z^2)=\frac{1}{2}z+\frac{1}{2}\lambda\left[g(z)(z-g(z))+g(z^2)\right]+\frac{\lambda^2 z}{2}g(z)g(z^2)=N(g)
\end{equation}
 We define  the linear operator $\mathfrak T=\mathfrak T_2$, on $\mathcal A(\mathbb{D})$ by
\begin{equation}
  \label{eq:defT}
  (\mathfrak T f)(z)=\frac{1}{2}\sum_{k=0}^{\infty}2^{-k}f(z^{2^k})
\end{equation}
This is the inverse of the operator $f\mapsto 2f-f^{\vee 2}$,
where $f^{\vee p}(z)=f(z^p)$.
Clearly, $\mathfrak T f$ is an isometry on   $\mathcal A(\mathbb{D})$
and it maps simple functions, such as generic polynomials, to
functions having $\partial \mathbb{D}$ as a natural boundary; it reproduces $f$
across vanishingly small scales.

We write (\ref{eq:H}) in the form
\begin{equation}
  \label{eq:eqH4}
  g=2\mathfrak TN(g)
\end{equation}
This equation is manifestly contractive in the sup norm, in the ball
of radius $1/2+1/4$ in $\mathcal{A_{\lambda}}$, the functions analytic
in the polydisk $\mathbb{P}_{1,\epsilon}=\mathbb{D}\times
\{\lambda:|\lambda|<\epsilon\}$, if $\epsilon$ is small enough.   For $\lambda\ne 0$, $\varphi=\psi^{-1}$ is analytic for small $z$ as well..

\section{Acknowledgments} 

Work supported by in part by NSF grants
 DMS-0406193 and DMS-0600369. Any
opinions, findings, conclusions or recommendations expressed in this material
are those of the authors and do not necessarily reflect the views of the
National Science Foundation.

\end{document}